\title[Fusion-invariant representations for symmetric groups]{Fusion-invariant representations for symmetric groups}
\author{Jos\'e Cantarero}
\thanks{}
\address{
\hfill\break Centro de Investigaci\'on en Matem\'aticas, A.C., Unidad M\'erida \\
\hfill\break Parque Cient\'ifico y Tecnol\'ogico de Yucat\'an \\ 
\hfill\break Carretera Sierra Papacal--Chuburn\'a Puerto Km 5.5 \\
\hfill\break Sierra Papacal, M\'erida, YUC 97302 \\
\hfill\break Mexico.}
\email{cantarero@cimat.mx}
\author{Jorge Gaspar-Lara}
\thanks{}
\address{
\hfill\break Centro de Investigaci\'on en Matem\'aticas, A.C. \\
\hfill\break C. Jalisco S/N, Col. Valenciana \\ 
\hfill\break Guanajuato, GTO 36023 \\
\hfill\break Mexico.}
\email{jorge.gaspar@cimat.mx}
\newcommand{\comments}[1]{}
\newcommand{\Aut}{\operatorname{Aut}\nolimits}
\newcommand{\Out}{\operatorname{Out}\nolimits}
\newcommand{\Ind}{\operatorname{Ind}\nolimits}
\newcommand{\Mod}[1]{\ \mathrm{mod}\ #1}
\newcommand{\reg}{\operatorname{reg}\nolimits}
\newcommand{\Rep}{\operatorname{Rep}\nolimits}
\newcommand{\Res}{\operatorname{Res}\nolimits}
\def \F{{\mathbb F}}
\def \Z{{\mathbb Z}}
\newcommand{\Ff}{{\mathcal{F}}}
\newcommand{\pcom}{^\wedge_p}
\theoremstyle{plain}
\newtheorem*{introtheorem}{Theorem}
\newtheorem{theorem}{Theorem}[section]
\newtheorem{proposition}[theorem]{Proposition}
\newtheorem{corollary}[theorem]{Corollary}
\newtheorem{lemma}[theorem]{Lemma}
\theoremstyle{definition}
\newtheorem{definition}[theorem]{Definition}
\newtheorem{remark}[theorem]{Remark}
\keywords{Monoids of representations, Nonfactorial monoids, Fusion systems}
\subjclass{Primary 20D20; Secondary 20C15, 20M14}
\begin{document}

\begin{abstract}
For a prime $p$, we show that uniqueness of factorization into irreducible 
$\Sigma_{p^2}$--invariant representations of $\Z/p \wr \Z/p$ holds if and 
only if $p=2$. We also show nonuniqueness of factorization for $\Sigma_8$--invariant
representations of $D_8 \wr \Z/2$. The representation ring of $\Sigma_{p^2}$--invariant
representations of $\Z/p \wr \Z/p$ is determined completely when $p$ equals two
or three.
\end{abstract}

\maketitle


\section*{Introduction}

Decomposing objects into irreducible pieces is a common strategy used
in mathematics to better understand them. This philosophy can be traced
back to Democritus' atomic hypothesis, which affirms that matter is composed
of indivisible parts and the properties of each material depends on the
kinds of indivisible parts that it contains.

These decompositions become more useful when they are unique up to reordering 
and isomorphisms of the pieces, because they can be used to completely distinguish 
two objects up to isomorphism by comparing their decompositions. Results stating 
the existence and uniqueness of such decompositions are usually referred to as 
Krull-Schmidt theorems in different contexts, for instance in the representation
theory of finite groups. 

The theory of saturated fusion systems and $p$-local finite groups \cite{BLO} appears in modular representation
theory and algebraic topology to model certain objects which behave like a finite group in
some sense, such as blocks of the group ring with coefficients in a $p$-local ring or 
$p$-completed classifying spaces of finite groups. In particular, the $p$-local structure
of a finite group $G$, that is, the subgroups of a $p$-Sylow subgroup $S$ and $G$--subconjugations 
between them, is an example of a saturated fusion system over $S$. 

The study of complex representations of saturated fusion systems, also called fusion-invariant representations, was motivated by the study of
homotopy classes of maps from the classifying spaces of $p$-local finite groups to $p$-completed classifying
spaces of unitary groups in \cite{CCM}. In this paper we are only interested in the particular case of complex 
representations of a $p$-group $S$ whose characters are invariant under $G$--conjugacy, where $S$ is a $p$-Sylow
subgroup of $G$. We call them $G$--invariant representations of $S$.

The similarity with finite groups was reinforced in \cite{BC}. It was shown that the Grothendieck group $R_G(S)$ of $G$--invariant 
representations of $S$ is a free abelian group whose rank is the number of $G$--conjugacy classes of elements of $S$. 
There is also an analogue of the Atiyah-Segal completion theorem (see \cite{A}, \cite{AS}) relating $R_G(S)$ and the 
$K$-theory of $BG \pcom$.

However, we observe a different behaviour when we decompose $G$--invariant representations as a direct sum of irreducible 
$G$--invariant representations. Examples without uniqueness of decomposition were found in \cite{G} and \cite{Re}, and a 
framework to understand this phenomenon was developed in \cite{CCo}, where other examples were found using the software GAP. 
Uniqueness of decomposition holds if and only the number of $G$--conjugacy classes of $S$ equals the number of irreducible 
$G$--invariant representations of $S$. In this article we include the unpublished results of \cite{G} and expand them to find new examples
without uniqueness of decomposition.

We begin with a brief review of $G$--invariant representations and saturated fusion
systems in Section \ref{FusionRepresentations}. Identifying the wreath product $\Z/p \wr \Z/p$
with a $p$-Sylow subgroup of $\Sigma_{p^2}$, we describe explicit $\Sigma_{p^2}$--conjugations 
of subgroups of $\Z/p \wr \Z/p$ which, together with inclusions, generate the fusion system 
corresponding to the $p$-local structure of $\Sigma_{p^2}$. This description follows easily
from results in \cite{AD} and \cite{CLl}, but we include it for convenience in Section \ref{FusionSymmetric}. 
We use this description to determine the representation rings of $\Sigma_4$--invariant representations of $D_8$ and
of $\Sigma_9$--invariant representations of $\Z/3 \wr \Z/3$.

The representation ring of $\Sigma_9$--invariant representations of $\Z/3 \wr \Z/3$ can be used to determine
when a representation of $\Z/3 \wr \Z/3$ is $\Sigma_9$--invariant. In fact, we find four different irreducible $\Sigma_9$--invariant
representations $A$, $B$, $C$, $D$ of $\Z/3 \wr \Z/3$ which satisfy $A \oplus B = C \oplus D$, illustrating a nonunique
decomposition. Determining the representation ring of $\Sigma_{p^2}$--invariant representations of $\Z/p \wr \Z/p$ for 
all primes $p>3$, even as an abelian group, would be a messy endeavour. However, for each $p>3$, we found four irreducible $\Sigma_{p^2}$--invariant
representations of $\Z/p \wr \Z/p$ which are natural generalizations of the representations $A$, $B$, $C$ and $D$ of $\Z/3 \wr \Z/3$
and which determine a nonunique factorization. 

\begin{introtheorem}
Let $p$ be an odd prime. The $\Sigma_{p^2}$--invariant representations of $\Z/p \wr \Z/p$ do not satisfy uniqueness of factorization 
as a sum of $\Sigma_{p^2}$--invariant irreducible representations.
\end{introtheorem}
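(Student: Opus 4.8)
The plan is to exhibit four pairwise non-isomorphic irreducible $\Sigma_{p^2}$--invariant representations $A,B,C,D$ of $S=\Z/p\wr\Z/p$ with $A\oplus B\cong C\oplus D$ and $\{A,B\}\neq\{C,D\}$; these are the generalizations of the $p=3$ representations. Write $N=(\Z/p)^p$ for the base subgroup and $t$ for the generator of the top factor. I would first record the irreducible representations of $S$: the $p^2$ linear characters $\lambda_{a,b}$ (with $a,b\in\Z/p$, where $a$ records the diagonal character of $N$ and $b$ the character of $\langle t\rangle$), and the $p$--dimensional $\rho_c=\Ind_N^S\chi_c$ indexed by the non-constant tuples $c\in(\Z/p)^p$ modulo cyclic shift. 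Grouping the $\rho_c$ by weight $w=\#\{i:c_i\neq0\}$, I set $R_w=\bigoplus_{\mathrm{wt}(c)=w}\rho_c$, and put $T=\bigoplus_{b\neq0}\lambda_{0,b}$ and $P=R_p\oplus\bigoplus_{a\neq0}\lambda_{a,0}$. The four candidates are
\[
A=R_1\oplus R_{p-1},\qquad B=T\oplus P,\qquad C=R_1\oplus T,\qquad D=R_{p-1}\oplus P.
\]
Since these four have pairwise disjoint sets of irreducible constituents, the identity $A\oplus B\cong C\oplus D\cong R_1\oplus R_{p-1}\oplus T\oplus P$ is immediate as an isomorphism of $S$--representations, and the two unordered pairs plainly differ.

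The real content is that each of $A,B,C,D$ is $\Sigma_{p^2}$--invariant and irreducible as such. For invariance I would use the fusion description of Section \ref{FusionSymmetric} to identify the $\Sigma_{p^2}$--conjugacy classes of elements of $S$. The base elements are classified by their weight, since $\Aut_{\Sigma_{p^2}}(N)=(\Z/(p-1))\wr\Sigma_p$ scales the coordinates by units and permutes them; two base elements are fused exactly when they have the same number of nonzero coordinates. The key point is that an element $v\,t^{j}$ with $j\neq 0$ and $\sum_i v_i=0$ has order $p$ and cycle type $p^p$ in $\Sigma_{p^2}$, the same cycle type as the full--weight base elements, so these non-base classes fuse with the weight--$p$ base class. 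I would then translate the condition that a class function $\chi_V$ be constant on $\Sigma_{p^2}$--classes into explicit linear relations among the multiplicities of the $\rho_c$ and the $\lambda_{a,b}$. Constancy on base weight classes forces the multiplicity of $\rho_c$ to depend only on $\mathrm{wt}(c)$, so the bundles $R_w$ are the natural units, while equality of the values of $\chi_V$ on the two representatives of cycle type $p^p$ produces one further relation; using $\binom{p}{w}\equiv 0\pmod p$ for $1\le w\le p-1$, this relation collapses to a clean integral equation linking the multiplicities of $R_1,\dots,R_{p-1}$ with those of $T$ and $P$. A direct check against this equation shows that $A,B,C,D$ are invariant, whereas none of $R_1$, $R_{p-1}$, $T$, $P$ is invariant on its own.

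For irreducibility I would argue that no proper nonzero $\Sigma_{p^2}$--invariant summand exists. If, say, $A=U\oplus V$ with $U,V$ invariant and nonzero, then constancy on weight classes forces each bundle $R_1$ and $R_{p-1}$ to lie entirely in $U$ or entirely in $V$, so $\{U,V\}=\{R_1,R_{p-1}\}$; but neither $R_1$ nor $R_{p-1}$ satisfies the cycle--type relation, contradicting invariance. The same dichotomy handles $B$, $C$ and $D$: each constituent bundle is forced to remain whole by the weight-- and column/row--constancy of fusion (the $\lambda_{0,b}$ in $T$ are locked together with equal multiplicity, and $R_p$ together with the $\lambda_{a,0}$ in $P$), yet no single bundle satisfies the cycle--type relation, so it cannot be invariant by itself. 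This yields four irreducible invariant representations with $A\oplus B\cong C\oplus D$ and $\{A,B\}\neq\{C,D\}$, which is the desired failure of unique factorization.

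I expect the main obstacle to be the bookkeeping of the second step: extracting from the fusion data the complete and correct system of linear relations characterizing invariance, and in particular isolating the single relation forced by the coincidence of cycle types $p^p$ and verifying that it is precisely what obstructs the invariance of each individual bundle while permitting that of the paired sums.
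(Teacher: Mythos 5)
Your proposal is correct and follows essentially the same route as the paper: after relabeling (your $A,B,C,D$ are the paper's $C,D,A,B$), your four representations are exactly $X_1+X_{p-1}$, $a_0^1+\cdots+a_0^{p-1}+a_1^0+\cdots+a_{p-1}^0+X_p$, $a_1^0+\cdots+a_{p-1}^0+X_1$ and $a_0^1+\cdots+a_0^{p-1}+X_{p-1}+X_p$ from Section \ref{NonUnique}, and your strategy --- fusion of base elements by weight, the coincidence of cycle type $p^p$ between weight-$p$ base elements and the elements $v\tau^j$ with $\sum_i v_i\equiv 0$, the $\Aut_{\Sigma_{p^2}}(K)$--orbit argument locking each bundle $R_w$ (Lemma \ref{SubrepsDeX}), and the failure of individual bundles to be invariant --- is the paper's. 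The deferred bookkeeping is exactly what Lemmas \ref{CharacterX}--\ref{RepA} and the subsequent lemmas carry out, including the constancy on the $p^2$-cycle class, which your sketch omits but which your candidates do satisfy.
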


Since $\Sigma_4$--invariant representations of $D_8$ satisfy uniqueness of decomposition, it was natural to
study the case of $\Sigma_8$--invariant representations of $D_8 \wr \Z/2$, where a nonunique decomposition
was found using its character table. 

The phenomenon of nonuniqueness of decomposition of fusion-invariant representations is still not fully understood.
The characterizations in \cite{CCo} have a combinatorial nature which requires knowledge of all irreducible fusion-invariant
representations or a particular basis of the representation ring. It would be desirable to have a characterization
in terms of the fusion system only. With the computations in this article we contribute to the current list of examples
without uniqueness and we hope that they can be used to prove similar for $\Sigma_{p^k}$ with $p$ odd and $k>2$ and 
for $\Sigma_{2^k}$ with $k>3$.

\section{Fusion-invariant representations}
\label{FusionRepresentations}

In this section we provide a brief review of fusion-invariant representations
and their properties. We direct the reader to Section 1 of \cite{CCo} for further
aspects of these representations that we will not delve into.
\newline

Given an element $g$ in a group $G$, let $c_g(x)=gxg^{-1}$. The following definition
describes the main study subjects of this paper, also called fusion-invariant representations
when $G$ is clear from the context.

\begin{definition}
\label{FusionInvariantConcrete}
Let $G$ be a finite group and $S$ a $p$-Sylow subgroup of $G$. We say that
a representation $\rho \colon S \to U_n$ is $G$--invariant
if the representations $ \rho_{|P}$ and $\rho_{|gPg^{-1}} \circ c_g$ are equivalent
for any $P \leq S$ and any $ g \in G$ that satisfies $gPg^{-1} \leq S$.
\end{definition}

The motivation for this definition comes from the study of vector bundles, $K$-theory
and maps $BG \pcom \to BU(n) \pcom$. We direct the interested reader to \cite{BC} 
and \cite{CCM} for the connection with these problems. They have been studied more generally
for certain locally finite $p$-groups in \cite{CC}, \cite{NS}, \cite{Z1} and \cite{Z2}. 
This paper deals with algebraic properties of these representations, so familiarity with 
those papers is not necessary. They have been studied from this point of view in \cite{CCo},
\cite{G} and \cite{Re}.

By character theory, a representation $\rho$ is $G$--invariant if and only if its 
character $\chi$ satisfies
\[ \chi(x) = \chi_{\rho}(gxg^{-1}) \]
for any pair $(x,g) \in S \times G$ such that $gxg^{-1} \in S$. This definition
is a particular case of an $\Ff$--invariant representation of $S$, where $\Ff$ is a
saturated fusion system over $S$. It corresponds to the particular case of the
fusion system $\Ff_S(G)$. We will not consider $\Ff$--invariant representations
for fusion systems which are not of the form $\Ff_S(G)$. However, we will use the
terminology of centric and radical subgroups, corresponding to Definition 1.6 and 
Definition A.9 of \cite{BLO}, respectively.

\begin{definition}
Let $\Ff$ be a saturated fusion system over $S$. We say that a subgroup $P$ of $S$ 
is $\Ff$--centric if $C_S(Q) \leq Q$ for each $Q \leq S$ which is isomorphic to $P$
in the category $\Ff$.
\end{definition}

For $\Ff_S(G)$, this corresponds to a subgroup $P$ of $S$ that satisfies $C_S(Q) \leq Q$
for all the subgroups $Q$ of $S$ which are $G$--conjugate to $P$.

\begin{definition}
Let $\Ff$ be a saturated fusion system over $S$. We say that a subgroup $P$ of $S$ 
is $\Ff$--radical if $\Out_{\Ff}(P)$ has no nontrivial normal $p$-subgroups.
\end{definition}

In the case of $\Ff_S(G)$, we have $\Out_{\Ff}(P)=\Out_G(P)$. The importance of these 
subgroups comes from Alperin's fusion theorem (see Theorem A.10 in \cite{BLO} for instance)
which essentially states that a saturated fusion system is generated by automorphisms
of centric radical subgroups and inclusions. In particular, a representation $\rho$ of $S$ is
$G$--invariant if and only $\rho_{|P}$ and $\rho_{|P} \circ c_g$ are equivalent for any $\Ff_S(G)$--centric
radical subgroup $P$ of $S$ and any $g \in N_G(P)$.

The class of $G$--invariant representations is closed under equivalence, direct sums,
and tensor products. Therefore the set $\Rep_G(S)$ of equivalence classes of $G$--invariant representations 
of $S$ has the structure of a commutative semiring, and we denote its Grothendieck construction
by $R_G(S)$. If we ignore the multiplicative structure, Corollary 2.2 in \cite{BC} shows that this is
a free abelian group whose rank is the number of $G$--conjugacy classes of elements of $S$. We
determine these rings completely for $\Sigma_4$ and $\Sigma_9$ in Section \ref{Sigma4y9}. 

\begin{definition}
A $G$--invariant representation is called irreducible if it can not be decomposed
as the direct sum of two nontrivial $G$--invariant representations.
\end{definition}

An irreducible $G$--invariant representation may not be irreducible if we regard it as a 
representation of $S$. For instance, the representation $x$ in Example 6.1 in \cite{BC}
is an irreducible $\Sigma_p$--invariant representation of the $p$-Sylow $S$ of $\Sigma_p$,
but it is not irreducible as a representation of $S$. 

Perhaps more surprisingly, the decomposition of a $G$--invariant representation as a direct 
sum of irreducible $G$--invariant representations is not unique (as usual we identify reordered decompositions). 
For example, Example A.2 in \cite{Re} and Example 4.5.5 in \cite{G} illustrate this phenomenon for $PGL_3(\F_3)$ 
at the prime $2$ and for $\Sigma_9$ at the prime $3$, respectively. Other examples have been found in \cite{CCo} 
as well. We include in Sections \ref{FusionSymmetric}, \ref{Sigma4y9} and \ref{NonUnique} part of the content of 
\cite{G} and use it as a basis to find new examples of nonunique factorizations in Sections \ref{NonUnique} and
\ref{Sigma8}. In the language of \cite{CCo}, we are finding new examples of monoids of the form $\Rep_G(S)$ which are not
factorial. 

\section{Fusion in symmetric groups}
\label{FusionSymmetric}

In this section we recall some results about the fusion system of 
the symmetric group $\Sigma_{p^2}$ at the prime $p$.
\newline

For each $i \in \{0,\ldots ,p-1\}$, consider the $p$-cycles
\begin{gather*}
\sigma_i=(ip+1,\ldots ,(i+1)p), \\
\tau_i=(i+1,i+1+p,\ldots,i+1+p(p-1))
\end{gather*}
in $\Sigma_{p^2}$. The subgroup $S$ generated by $\sigma_0, \ldots, \sigma_{p-1}$ and
$\tau=\tau_0 \cdots \tau_{p-1}$ is a $p$-Sylow subgroup of $\Sigma_{p^2}$. It is clearly isomorphic to $\Z/p \wr \Z/p$. We let $K$ be the subgroup generated
by $\sigma_0, \ldots, \sigma_{p-1}$, which is elementary abelian of rank $p$, and let
$L$ be the subgroup generated by $\sigma_0\cdots \sigma_{p-1}$ and $\tau$, which is
elementary abelian of rank $2$. We also let $N$ be the subgroup generated by $\tau$.

\begin{lemma}
\label{ciclos}
Any element in $S-K$ is a $p^2$-cycle or a product of $p$ disjoint $p$-cycles.
\end{lemma}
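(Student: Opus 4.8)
The plan is to understand the structure of $S = \Z/p \wr \Z/p$ as permutations of the set $\{1, \ldots, p^2\}$, organized into $p$ blocks of size $p$. The subgroup $K = \langle \sigma_0, \ldots, \sigma_{p-1}\rangle$ consists of products of the $\sigma_i$, where each $\sigma_i$ is a $p$-cycle on the $i$-th block $B_i = \{ip+1, \ldots, (i+1)p\}$. The element $\tau = \tau_0 \cdots \tau_{p-1}$ permutes the blocks cyclically: each $\tau_j$ is a $p$-cycle sending position $j+1$ in block $B_i$ to the same position in block $B_{i+1}$ (indices mod $p$). So $\tau$ acts on $\{1, \ldots, p^2\}$ by cyclically rotating the blocks. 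First I would set up this block description precisely, identifying $K$ as the base group (acting within blocks) and $\tau$ as generating the top $\Z/p$ (permuting blocks).

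The key structural fact is that any element $s \in S$ can be written uniquely as $s = \kappa \tau^a$ with $\kappa \in K$ and $0 \le a \le p-1$, since $K$ is normal in $S$ with $S/K \cong \Z/p$ generated by the image of $\tau$. The condition $s \in S - K$ means $a \not\equiv 0 \pmod p$. The plan is to analyze the cycle structure of $s = \kappa \tau^a$ as a permutation. Since $a \ne 0$ and $p$ is prime, $\tau^a$ still rotates the blocks in a single $p$-cycle (on the set of blocks). I would track how $s$ moves a point: starting in block $B_i$ at some position, applying $s$ lands in block $B_{i+a}$ at a position shifted by the action of the relevant $\sigma$'s. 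After $p$ applications, the block index returns to $B_i$ (since $pa \equiv 0$), and the net position shift within the block is governed by the product of the $\sigma$-exponents accumulated around the cycle.

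The crucial computation is: iterating $s$ exactly $p$ times returns every point to its own block, and the induced permutation $s^p$ lies in $K$ and equals a product of the $\sigma_i$ raised to a power determined by the ``total twist'' $t = \sum_{i} m_i$ where $\kappa = \prod \sigma_i^{m_i}$ (the sum of exponents, independent of which block because the rotation cycles through all blocks exactly once). Concretely, $s^p$ acts on each block as a power of the corresponding $\sigma$, and by the block-transitivity the exponent is the same total sum $t \bmod p$ on every block. This gives the dichotomy: if $t \equiv 0 \pmod p$, then $s^p = \id$, so all cycles of $s$ have length dividing $p$; since $s \notin K$ moves points between distinct blocks, no point is fixed, forcing $s$ to be a product of $p$ disjoint $p$-cycles (the $p^2$ points split into $p$ orbits of size $p$). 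If $t \not\equiv 0 \pmod p$, then $s^p$ is a nontrivial product of $p$-cycles, so $s$ has order $p^2$, and since $s$ is a single order-$p^2$ element permuting $p^2$ points it must be a single $p^2$-cycle.

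The main obstacle I anticipate is making the orbit-counting argument fully rigorous: I must verify that when $t \not\equiv 0$ the element $s$ acts transitively on all $p^2$ points (yielding a single $p^2$-cycle rather than, say, several shorter cycles whose lengths are multiples of $p$). The cleanest route is to show $s^p$ restricted to each block is $\sigma_i^{t}$, a $p$-cycle when $t \ne 0 \pmod p$; then the orbit of any point under $s$ has size divisible by $p$ (because $s$ permutes blocks in a $p$-cycle) and also the orbit must contain a full $\sigma_i^{t}$-orbit within a block under $s^p$, forcing the orbit size to be exactly $p^2$. I would carry out this final transitivity verification carefully, as it is the one place where the block structure and the twist computation must be combined, and then conclude the two stated cases.
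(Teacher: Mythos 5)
Your argument is correct, but it takes a more computational route than the paper's. The paper observes that any $\rho\in S-K$ has order a power of $p$, so every cycle length is a power of $p$ dividing $p^2$; it then only needs to check that $\rho$ has no fixed points (which follows because $\rho=\alpha\tau^b$ with $b\not\equiv 0$ sends each block to a different block), whence every cycle has length $p$ or $p^2$ and the dichotomy is immediate. You instead compute $s^p$ explicitly: writing $s=\kappa\tau^a$ with $\kappa=\prod\sigma_i^{m_i}$, conjugation by $\tau^a$ cyclically permutes the $\sigma_i$ (transitively, since $\gcd(a,p)=1$), so $s^p=\prod_i\sigma_i^{t}$ with $t=\sum m_i$, and you case-split on $t\bmod p$. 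This is valid, and the transitivity worry you raise at the end can be dispatched more cheaply than you suggest: when $t\not\equiv 0$ the element $s$ has order $p^2$, and since the order of a permutation is the least common multiple of its cycle lengths, some cycle must have length $p^2$, which already exhausts all $p^2$ points; no separate orbit-counting is needed. The trade-off is that your computation of $s^p$ proves strictly more than the lemma asks (it identifies \emph{which} of the two cases occurs in terms of $\lambda(s)=t$, which is exactly the second claim of the paper's Lemma~\ref{conjS}), while the paper's fixed-point argument is shorter and defers that refinement to the next lemma.
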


\begin{proof}
Let $\rho \in S-K$. Since the order of $\rho$ is a power of $p$, it suffices to show that 
the action of $\rho$ on $\{1,\ldots,p^2\}$ has no fixed points. The element $\rho$ must
be of the form $\alpha \tau^b $ with $b \in \{ 1, \ldots, p-1 \}$ and $\alpha \in K$. 
Given $k \in \{ 1, \ldots, p^2 \}$, there exists a unique $i \in \{0,\ldots ,p-1\}$ such that $k\in \{ip+1,\ldots ,(i+1)p\}$. Note that
\[ \tau^b(k) \equiv k+bp \Mod p^2 . \]
Let $j\in \{0,\ldots ,p-1\}$ be such that $j \equiv i+b \Mod p$. Then $\tau^b(k)\in \{jp+1,\dots ,(j+1)p\}$
and $\rho(k)= \alpha \tau^b(k)\in \{jp+1,\dots ,(j+1)p\}$. Since $b\in \{1,\dots,p-1\}$, we have $j\neq i$ and
we conclude that $\rho(k)\neq k$.
\end{proof}

In the next lemma, we identify when each possibility happens. Given an element $ x = \sigma_0^{a_0}\cdots	\sigma_{p-1}^{a_{p-1}}\tau^a \in S$,
we let $\lambda(x) = a_0 + \ldots + a_{p-1}$ and $\mu(x) = a$.
		
\begin{lemma}
\label{conjS}
Two elements $x$, $y$ in $S-K$ are conjugate in $S$ if and only if $\mu(x)=\mu(y)$ and $\lambda(x) \equiv \lambda(y) \Mod p$. The element $x$ is
a product of $p$ different $p$-cycles if and only if $\lambda(x)$ is a multiple of $p$.
\end{lemma}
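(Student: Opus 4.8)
The plan is to exploit the semidirect product structure $S = K \rtimes N$, where $N = \langle \tau \rangle$ acts on $K$ by shifting indices, since $\tau \sigma_i \tau^{-1} = \sigma_{i+1}$ with indices read modulo $p$ (this follows from $\tau(k) \equiv k + p \Mod{p^2}$ as computed in the previous lemma). Writing $x = \sigma_0^{a_0} \cdots \sigma_{p-1}^{a_{p-1}} \tau^a$ with $a = \mu(x) \neq 0$, I would first establish that $\mu$ and the residue of $\lambda$ modulo $p$ are invariants of $S$-conjugacy by computing the two types of conjugation directly. Conjugation by $\tau^b$ fixes the $\tau$-exponent and cyclically permutes the vector $(a_i)$, hence preserves both $\mu$ and $\lambda$. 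Conjugation by $g = \sigma_0^{b_0} \cdots \sigma_{p-1}^{b_{p-1}} \in K$ sends $(a_i)$ to $(a_i + b_i - b_{i-a})$, using that $K$ is abelian and $\tau^a \sigma_i \tau^{-a} = \sigma_{i+a}$; this fixes $\mu = a$, and since $\sum_i (b_i - b_{i-a}) = 0$ it preserves $\lambda \Mod p$. This gives the forward implication.

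For the converse I would analyze the linear map $T \colon (\Z/p)^p \to (\Z/p)^p$ defined by $T(b)_i = b_i - b_{i-a}$. Because $a \not\equiv 0 \Mod p$, the shift $i \mapsto i - a$ is a single $p$-cycle on $\Z/p$, so the kernel of $T$ consists exactly of the constant vectors and has dimension one. Hence the image has dimension $p-1$ and, being contained in the hyperplane $\{v : \sum_i v_i = 0\}$, must equal it. Consequently conjugation by $K$ realizes every exponent vector whose coordinate sum is congruent to $\lambda(x)$ modulo $p$. So if $\mu(x) = \mu(y)$ and $\lambda(x) \equiv \lambda(y) \Mod p$, I can choose $g \in K$ conjugating the exponent vector of $x$ to that of $y$, proving that $x$ and $y$ are conjugate in $S$.

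For the second statement I would compute $x^p$ using the wreath-product power formula. Writing $\phi$ for conjugation by $\tau^a$, one has $x^p = \prod_{j=0}^{p-1} \phi^j(k)$ where $k = \sigma_0^{a_0} \cdots \sigma_{p-1}^{a_{p-1}}$, since $\tau^{ap} = 1$. As $\phi^j(\sigma_i) = \sigma_{i+ja}$ and $a$ is coprime to $p$, each $\sigma_m$ collects the exponent $\sum_{j=0}^{p-1} a_{m-ja} = \lambda(x)$, whence $x^p = (\sigma_0 \cdots \sigma_{p-1})^{\lambda(x)}$. Since $\sigma_0 \cdots \sigma_{p-1}$ has order $p$, the element $x^p$ is trivial precisely when $\lambda(x) \equiv 0 \Mod p$. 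By Lemma \ref{ciclos} an element of $S - K$ is either a $p^2$-cycle, of order $p^2$, or a product of $p$ disjoint $p$-cycles, of order $p$, so $x$ is of the latter type if and only if $x^p = 1$, that is, if and only if $\lambda(x)$ is a multiple of $p$.

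I expect the only real subtlety to be bookkeeping in the two conjugation and power formulas: keeping track of index shifts modulo $p$ and invoking the coprimality of $a$ and $p$ at the right moments, rather than any conceptual obstruction. Once the identity $x^p = (\sigma_0 \cdots \sigma_{p-1})^{\lambda(x)}$ and the surjectivity of $T$ onto the sum-zero hyperplane are in hand, both assertions follow immediately.
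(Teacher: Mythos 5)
Your argument is correct and rests on the same two computations as the paper's proof: the conjugation formula in the semidirect product $K \rtimes \langle\tau\rangle$, which shows that $\mu$ and the class of $\lambda$ modulo $p$ are conjugacy invariants, and the identity $x^p=(\sigma_0\cdots\sigma_{p-1})^{\lambda(x)}$. You diverge in two places, both harmlessly. For the sufficiency of the invariants, the paper exhibits an explicit conjugator $z$ in $K$ whose exponents are partial sums of the differences $b_{ia}-a_{ia}$, whereas you prove that the difference map $T(b)_i=b_i-b_{i-a}$ surjects onto the sum-zero hyperplane by rank--nullity, using that its kernel is the line of constant vectors because the shift by $a$ is a $p$-cycle on $\Z/p$; this is less explicit but sidesteps the index bookkeeping, and the two arguments encode the same fact. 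For the statement about cycle type, the paper obtains the forward direction by conjugating $x$ into $N$ via the first part of the lemma and only uses the $p$-th power identity for the converse, while you get both directions at once from that identity combined with the order dichotomy supplied by Lemma \ref{ciclos} (order $p$ versus order $p^2$). There is no gap in either variation.
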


\begin{proof}
Note that
\[ \sigma_0^{c_0}\cdots \sigma_{p-1}^{c_{p-1}}\tau^c \sigma_0^{a_0}\cdots	\sigma_{p-1}^{a_{p-1}}\tau^a \tau^{-c}\sigma_0^{-c_0}\cdots \sigma_{p-1}^{-c_{p-1}} =  \sigma_c^{a_0+c_c-c_{c-a}}\cdots	\sigma_{p-1 +c}^{a_{p-1}+c_{p-1+c}-c_{p-1+c-a}} \tau^a , \]
from where we see that $\mu(x)$ and the residue of $\lambda(x)$ modulo $p$ are $S$--conjugacy invariants. On the other hand, let 
$ x = \sigma_0^{a_0}\cdots	\sigma_{p-1}^{a_{p-1}}\tau^a$ and $ y= \sigma_0^{b_0}\cdots \sigma_{p-1}^{b_{p-1}}\tau^b$ be such that 
$a=b$ and $\lambda(x) \equiv \lambda(y) \Mod p$. We can rewrite the last congruence as
\[ a_0- \left( \sum_{i=1}^{p-1} b_{ia}-a_{ia} \right) \equiv b_0 \Mod p . \]
Consider the element
\[ z = \sigma_a^{b_a-a_a} \sigma_{2a}^{b_{2a}-a_{2a}+b_a-a_a}\cdots \sigma_{(p-1)a}^{\sum_{i=1}^{p-1} b_{ia}-a_{ia}} . \]
A straightforward computation using the congruence above shows that
\[ zxz^{-1} = y \]
and this shows the first claim. For the second claim, note that if $\lambda(x)$ is a multiple of $p$, then $x$ is conjugate in $S$
to a nontrivial element of $N$, hence it is a product of $p$ different $p$-cycles. On the other hand, if $ x = \sigma_0^{a_0}\cdots	\sigma_{p-1}^{a_{p-1}}\tau^a$ 
is a product of $p$ different $p$-cycles, then
\[ 1 = x^p = \sigma_0^{a_0+\ldots+a_{p-1}} \cdots \sigma_{p-1}^{a_0+\ldots+a_{p-1}}, \]
from where $\lambda(x)$ is a multiple of $p$.
\end{proof}

Recall the following notation and terminology from Definition 7 of \cite{AD}. Let $p$ 
be a prime and let $c_1, \ldots ,c_t,d$ be positive integers. For any enumeration of the
elements of $(\Z/p)^d$, the corresponding permutation action of $(\Z/p)^d$ on $\{1,\ldots,p^d\}$ 
determines a monomorphism $ (\Z/p)^d \to \Sigma_{p^d}$ and we denote by $A_d$ the image of
this map. Different enumerations give rise to conjugate subgroups.

The basic subgroup of $\Sigma_{p^{c_1+\ldots+c_t}}$ associated to $\textbf{c}=(c_1,\dots c_t)$ is given by
\[ A_{\textbf{c}}=A_{c_1}\wr \cdots \wr A_{c_t} . \]
We use the notation $|\textbf{c}|=c_1+\ldots+c_t$. We now identify the possible 
centric radical subgroups in $\Ff_S(\Sigma_{p^2})$ and their automorphism groups. 

\begin{proposition}
If $p$ is an odd prime, the only centric radical subgroups in $\Ff_S(\Sigma_{p^2})$ up to isomorphism are $S$, $K$ and $L$.
The only centric radical subgroups in $\Ff_S(\Sigma_4)$ up to isomorphism are $S$ and $L$.
\end{proposition}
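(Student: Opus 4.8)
The plan is to reduce the classification to a short explicit list of candidate subgroups and then decide $\Ff$-centricity and $\Ff$-radicality for each by computing centralizers and automizers directly inside $\Sigma_{p^2}$. Recall that an $\Ff_S(\Sigma_{p^2})$-centric subgroup is in particular $S$-centric, i.e. $C_S(P)\le P$, and that a fully normalized $\Ff$-centric radical subgroup is a radical $p$-subgroup of $\Sigma_{p^2}$ in the sense that $O_p(N_{\Sigma_{p^2}}(P))=P$: indeed, $\Ff$-centricity forces $C_{\Sigma_{p^2}}(P)/Z(P)$ to be a $p'$-group, and then the image of $O_p(N_{\Sigma_{p^2}}(P))$ in $\Aut_{\Sigma_{p^2}}(P)$ is a normal $p$-subgroup, hence lies in $\Inn(P)$ when $P$ is $\Ff$-radical, which pins $O_p(N_{\Sigma_{p^2}}(P))$ down to $P$. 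Invoking the classification of radical $p$-subgroups of symmetric groups from \cite{AD} together with the wreath-product fusion data of \cite{CLl}, the candidates among subgroups of $S$ reduce to those assembled from the basic subgroups $A_2$ and $A_1\wr A_1$ and the base group, namely $S$, $K$, $L$ and $N$ (and their $\Ff$-conjugates).

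First I would discard the non-centric candidates. Identifying $\sigma_0\cdots\sigma_{p-1}$ and $\tau$ with the two coordinate translations of $(\Z/p)^2$ shows that $L\cong A_2$ acts regularly on $\{1,\ldots,p^2\}$, so $C_{\Sigma_{p^2}}(L)=L$; a direct computation with the block structure (any permutation commuting with each $\sigma_i$ preserves the $i$-th block and restricts to the regular cyclic group on it) gives $C_{\Sigma_{p^2}}(K)=K$. Hence $K$, $L$ and $S$ are all $\Ff$-centric. By contrast $C_S(N)\supseteq L\supsetneq N$, so $N$ is not even $S$-centric and is eliminated. This leaves $S$, $K$ and $L$ as the $\Ff$-centric candidates for every $p$.

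It then remains to test $\Ff$-radicality, that is, whether $\Out_{\Sigma_{p^2}}(P)$ has no nontrivial normal $p$-subgroup. Since $S$ is a Sylow subgroup, $\Inn(S)$ is a normal Sylow $p$-subgroup of $\Aut_{\Sigma_{p^2}}(S)$, so $\Out_{\Sigma_{p^2}}(S)$ is a $p'$-group and $S$ is radical for all $p$. For $L\cong A_2$, its normalizer is the affine group $(\Z/p)^2\rtimes GL_2(\F_p)$, whence $\Out_{\Sigma_{p^2}}(L)\cong GL_2(\F_p)$, which has trivial $O_p$ for all $p$, so $L$ is radical for all $p$. The decisive case is $K$: computing $N_{\Sigma_{p^2}}(K)$ as the monomial normalizer $(\Z/p\rtimes\Z/(p-1))\wr\Sigma_p$ and dividing by $C_{\Sigma_{p^2}}(K)=K$ gives $\Out_{\Sigma_{p^2}}(K)\cong(\Z/(p-1))\wr\Sigma_p$.

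The main obstacle, and the source of the dichotomy, is reading off $O_p$ of this last group. For odd $p$ the base $(\Z/(p-1))^p$ is a $p'$-group and $O_p(\Sigma_p)=1$, so $O_p\bigl((\Z/(p-1))\wr\Sigma_p\bigr)=1$ and $K$ is radical; for $p=2$ the factor $\Z/(p-1)$ is trivial and the group collapses to $\Sigma_2\cong\Z/2$, which is itself a nontrivial normal $2$-subgroup, so $K$ is not radical. This is exactly why $K$ survives for odd $p$ but drops out for $\Sigma_4$, producing the two asserted lists. I expect the genuinely delicate points to be justifying completeness of the candidate list from the cited classifications and computing the automizers $\Aut_{\Sigma_{p^2}}(P)$ inside the full symmetric group rather than inside $S$, since it is these global normalizers, not their intersections with $S$, that control radicality.
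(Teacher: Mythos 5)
Your overall strategy is the same as the paper's: use the An--Dietrich classification of radical $p$-subgroups of symmetric groups to cut the candidates down to products of basic subgroups, then test centricity by computing centralizers and radicality by computing $\Out_{\Sigma_{p^2}}(P)$. Your centricity computations ($C_{\Sigma_{p^2}}(L)=L$ by regularity, $C_{\Sigma_{p^2}}(K)=K$ by the block structure) and the identifications $\Out_{\Sigma_{p^2}}(L)\cong GL_2(\F_p)$, $\Out_{\Sigma_{p^2}}(K)\cong \Z/(p-1)\wr\Sigma_p$ all match the paper. However, there is a genuine error at the decisive step. You conclude $O_p\bigl((\Z/(p-1))\wr\Sigma_p\bigr)=1$ for odd $p$ from the two premises that the base is a $p'$-group and that $O_p(\Sigma_p)=1$. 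The second premise is false for $p=3$: $O_3(\Sigma_3)=A_3\cong\Z/3$ is nontrivial, since $A_3$ is normal. So your argument does not show that $K$ is radical in the case $p=3$, which is precisely the case the rest of the paper leans on ($\Sigma_9$). The conclusion is still true --- $O_3(\Z/2\wr\Sigma_3)=1$ --- but it needs an additional argument: one must check that no copy of $\Z/3$ covering $A_3$ is normal in $\Z/2\wr\Sigma_3$. The paper does this uniformly for all odd $p$ by exhibiting two distinct Sylow $p$-subgroups of $\F_p^\times\wr\Sigma_p$, namely those generated by $((1,\ldots,1),(1,2,\ldots,p))$ and its conjugate $((q,q^{-1},1,\ldots,1),(1,2,\ldots,p))$; since a nontrivial normal $p$-subgroup would have to be the unique Sylow $p$-subgroup (the Sylow subgroups here have order $p$), this settles radicality without ever invoking $O_p(\Sigma_p)$.

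A secondary issue is the completeness of your candidate list. The An--Dietrich decomposition also produces, for each $1\le s<p$, the subgroups conjugate to $\langle\sigma_{i_1},\ldots,\sigma_{i_s}\rangle$ (products of $s$ copies of $A_1$ on disjoint blocks); these are genuine radical $p$-subgroups of $\Sigma_{p^2}$ and must be eliminated, which the paper does by observing that a leftover $p$-cycle on the unused points centralizes them, so they are not centric. Your list $\{S,K,L,N\}$ omits them (while including $N$, which is not even in the An--Dietrich list and which you correctly discard anyway). You flag the completeness of the candidate list as a delicate point yourself, but as written the case $1\le s<p$ is simply missing.
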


\begin{proof}
Let $R \leq S$ be a centric radical subgroup in $\Ff_S(\Sigma_{p^2})$. Since $R$ is 
radical, by Theorem 9 from \cite{AD}, there exist decompositions
\begin{gather*}
\{1, \ldots, p^2 \} = V_0 \amalg \ldots \amalg V_s , \\
R = \{ 1 \} \times R_1 \times \cdots \times R_s ,
\end{gather*}
where $R_i$ is a basic subgroup of $S_{V_i}$ associated to $\textbf{c}_i$ and $| V_i | =p^{|\textbf{c}_i|}$ for $i\in \{1,\ldots ,s\}$.
Since $|V_i| \leq p^2$, we have $|\textbf{c}_i|\leq 2$. 

If there exists $j$ with $|\textbf{c}_j|=2$, then $|V_j|=p^2$. Hence $s=j=1$ and $V_0$ is empty. If $\textbf{c}_1=(1,1)$, then $R=A_1\wr A_1 = S$.
If $\textbf{c}_1=(2)$, then $R=A_2$. If we enumerate $(\Z/p)^2$ using the bijection
\begin{align*}
\varphi \colon (\Z/p)^2 & \to \{1,\ldots,p^2\},  \\
(l,k) & \mapsto  lp+(k+1), 
\end{align*}
the image of the corresponding permutation action $(\Z/p)^2 \to \Sigma_{p^2}$ is precisely $L$. By Theorem 9 in \cite{AD}, the centralizer
of $A_2$ in $\Sigma_{p^2}$ is contained in $A_2$, hence $C_S(gA_2g^{-1}) \leq gA_2 g^{-1}$ for any $g \in G$. That is, $L$ is centric. Since
$L$ is abelian and centric, we have
\[ \Out_{\Sigma_{p^2}}(L) = N_{\Sigma_p^2}(L)/L \]
and by part (b) of \cite[Theorem 9]{AD}, this group is isomorphic to $GL_2(\F_p)$. The order of this group is $(p^2-1)(p-1)p$, so the
order of its maximal normal $p$-subgroup divides $p$. It it had order $p$, it would be a normal $p$-Sylow subgroup, but the subgroups
generated by the matrices
\[ \left(\begin{array}{cc}  1 & 0 \\ 1 & 1 \end{array}\right)  \text{ and } \left(\begin{array}{cc}  1 & 1 \\ 0 & 1 \end{array}\right) \]
are two different $p$-Sylow subgroups of $GL_2(\F_p)$. Therefore $L$ is radical.

On the other hand, if $|c_i|=1$ for each $i \in \{1,\ldots ,s\}$, then $|V_i|=p$ and $R_i=A_1\cong \Z/p$ for $ i \geq 1$. Each $R_i$
is generated by a $p$-cycle. If $s<p$, then $|R|=ps<p^2$, so there exists a set $J=\{j_1,\ldots , j_p\}$ such that $J\cap V_i=\emptyset$ for all $i \in \{1,\ldots ,s\}$.
Then $\sigma=(j_1,\ldots ,j_p) \in C_S(R) - R$, so the subgroup $R$ is not centric.

If $s=p$, then $R$ is a product of $p$ cyclic groups, each generated by a $p$-cycle. By
Lemma \ref{ciclos}, we must have $R=K$. In particular, $K$ is the only subgroup of $S$ 
in its $G$--conjugacy class. Hence it suffices to show $C_S(K) \leq K$ to prove that $K$ 
is centric. Given an element $\rho=\sigma_0^{a_0}\cdots	\sigma_{p-1}^{a_{p-1}}\tau^b \in S\setminus K$ we have
\begin{gather*}
\sigma_0 \rho =\sigma_0^{a_0 +1}\cdots	\sigma_{p-1}^{a_{p-1}}\tau^b, \\
\rho \sigma_0=\sigma_0^{a_0} \cdots \sigma_b ^{a_b +1} \cdots	\sigma_{p-1}^{a_{p-1}}\tau^b .
\end{gather*}
Since $b$ is not a multiple of $p$, we can conclude $\rho \sigma_0\neq \sigma_0 \rho$. Using part (b) of Theorem 9 in \cite{AD}, we have
\[ \Out_{\Sigma_{p^2}}(K) \cong GL_1(\F_p) \wr \Sigma_p = \F_p^{\times} \wr \Sigma_p . \]
If $p=2$, this is isomorphic to $\Z/2$, so $K$ is not radical. Assume now $p \neq 2$. The order of this group is $(p-1)^p p!$, 
so the order of its maximal normal $p$-subgroup divides $p$. The element $((1,\ldots,1),(1,2,\ldots ,p))$ has order $p$
and generates a $p$-Sylow subgroup of $\F_p^{\times} \wr \Sigma_p$. Let $q$ be a generator of $\F_p^{\times}$. Then 
\[ ((q,\ldots,1),(1))((1,\ldots,1),(1,2,\ldots ,p))((q^{-1},\ldots,1),(1)) =((q,q^{-1},\ldots,1),(1,2,\ldots ,p)) \]
is an element of orden $p$ which generates a different $p$-Sylow subgroup. Hence $K$ is radical. Since $S$ is always 
centric radical in any saturated fusion system over $S$, this concludes the proof.
\end{proof}

In the previous proof we identified the outer automorphism group of $K$ and $L$ in the fusion system of $\Sigma_{p^2}$.
It will be convenient to give explicit generators in each case, as well as in the case of $S$. Let $q$ be a generator
of $\F_p^{\times}$. First we have
\[ \Out_{\Sigma_{p^2}}(L) \cong GL_2(\F_p). \]
By Theorem 5.1 in \cite{MGL}, this group is generated by the matrices
\[ \left( \begin{array}{cc}
   1 & 0 \\ 
  -1 & -1 \end{array}\right), \qquad	\left(\begin{array}{cc}
                                            0 & 1 \\ 
                                            1 & 0 \end{array}\right), \qquad \left(\begin{array}{cc}  
                                                                                    q & 0 \\ 
                                                                                    0 & 1 \end{array}\right). \]
Second, for $K$ we have 
\[ \Out_{\Sigma_{p^2}}(K) \cong \F_p^{\times} \wr \Sigma_p ,\]
and it is generated by the elements
\[ ((q,1, \ldots,1),1),((1,q,\ldots,1),1),\ldots , ((1,1,\ldots,q),1) \]
and
\[ ((1,\ldots,1),(1,2)), ((1,\ldots,1),(1,2,\ldots ,p)). \]
Finally, we have 
\[ \Out_{\Sigma_{p^2}}(S) \cong GL_1(\F_p) \times GL_1(\F_p) = \F_p^{\times} \oplus \F_p^{\times} \]
which clearly has generators $(q,1)$ and $(1,q)$. Elements of $N_{\Sigma_{p^2}}(S)$ representing them
can be obtained from \cite{CLl}. Namely, a splitting of the short exact sequence 
\[ 0 \to S \to N_{\Sigma_{p^2}}(S) \to \F_p^{\times} \oplus \F_p^{\times} \to 0 \]
is given by 
\begin{align*}
 & \psi \colon \F_p^{\times} \oplus \F_p^{\times} \to N_{\Sigma_{p^2}}(S), \\
 & \psi(q,1)(lp+(k+1))=qlp+(k+1), \\
 & \psi(1,q)(lp+(k+1))=lp+(qk+1), 
\end{align*}
for $l\in \{0,\ldots ,p-1\}$ and $k\in \{0,\ldots ,p-1\}$. The expressions $lp+(k+1)$ and their
images are taken modulo $p^2$.

\section{Fusion-invariant representations for $\Sigma_4$ and $\Sigma_9$}
\label{Sigma4y9}

In this section we determine the representation rings of the fusion systems of $\Sigma_4$
and $\Sigma_9$ at the primes $2$ and $3$, respectively. Then we find a nonunique factorization
into irreducible $\Sigma_9$--invariant representations of $\Z/3 \wr \Z/3$. From now on, we use
the same notation for a representation and its equivalence class. We also use the symbol $+$
for the direct sum of representations or for the equivalence class of the direct sum of the
representatives.
\newline

We begin with $\Sigma_4$ at the prime $2$. The $2$-Sylow subgroup described in the previous section is generated by
$(1,2)$, $(3,4)$ and $(1,3)(2,4)$. It is also generated by the elements $(1,4,2,3)$ and $(1,2)$, and we obtain an isomorphism
between $S$ and $D_8$ by identifying them with the usual generators $r$ and $s$. Its subgroup $L$ is generated by 
\[ (1,2)(3,4), (1,3)(2,4) \]
and $\Out_{\Sigma_4}(L)$ is generated by $c_{(23)}$ and $c_{(24)}$, while $\Out_{\Sigma_4}(S)$ is trivial. We will identify
$S$ with $D_8$ and $L$ with the subgroup $\{1,r^2,sr,sr^3 \}$ for convenience. Note that $c_{(2,3)}$ permutes $r^2$ with $sr^3$
and $c_{(2,4)}$ permutes $r^2$ with $sr$, both of them fixing the remaining elements of $L$.

Let $X$, $Y$ and $XY$ be the nontrivial irreducible $1$-dimensional representations of $S$ whose kernels are the subgroups
generated by $\{r\}$, $\{s,r^2\}$ and $\{sr^3,r^2\}$, respectively. Let $Z$ be its irreducible two-dimensional representation.
Similarly, let $V$, $W$ and $VW$ be the nontrivial irreducible one-dimensional representations of $L$ whose kernels are the
subgroups generated by $r^2$, $sr$ and $sr^3$, respectively. Note that the notations $XY$ and $VW$ are used because these
representations are in fact tensor products. The following table can be obtained directly or using characters.

\begin{table}[H]
\begin{center}
\begin{tabular}{l|lll}
$A$ & $\Res^S_L(A)$ & $c_{(2,3)}^*\Res^S_L(A)$ & $c_{(2,4)}^*\Res^S_L(A)$ \\ \hline
$X$   		& $V$ & $VW$ & $W$ \\ 
$Y$ 	    & $V$ & $VW$ & $W$ \\
$Z$ 		& $W+VW$ & $W+V$ & $V+VW$ \\   
\end{tabular}
\caption{Restrictions of irreducible representations of $D_8$}
\end{center}
\end{table}

Now a virtual representation
\[ \psi=a1+bX+cY+dXY+eZ \]
of $S$ is $\Sigma_{p^2}$--invariant if and only if both $c_{(2,3)}^*\Res^S_L(\psi)$ and $c_{(2,4)}^*\Res^S_L(\psi)$ equal $\Res^S_L(\psi)$, that is,
\begin{gather*}
a1+bVW+cVW+d1+e(W+V) = a1+bV+cV+d1+e(W+VW), \\
a1+bW+cW+d1+e(V+VW) = a1+bV+cV+d1+e(W+VW),
\end{gather*}
from where $ e = b+c$. Therefore the set $\{1,XY,X+Z,Y+Z\}$ is a basis of $R_{\Sigma_4}(D_8)$ as a free abelian group.
Using character theory, it is easy to determine that, as a ring,
\[ R_{\Sigma_4}(D_8) \cong  \Z[r,s,t]/(r^2-r-s-t-2,s^2-r^2,rs-r-s-2t,rt-t,st-r,t^2-1)\] 
where $r$, $s$ and $t$ correspond to $X+Z$, $Y+Z$ and $XY$, respectively.
\newline

Now we treat the case of $\Sigma_9$ at the prime $3$. In this case its $3$-Sylow subgroup $S$ is generated by
the elements
\[ (1,2,3) , (4,5,6), (7,8,9) , (1,4,7)(2,5,8)(3,6,9). \]
The irreducible representations of $S$ can be determined by the method of Section 8.2 in \cite{SJLR},
since $S = K \rtimes \Z/3$. Recall that $K$ is an elementary abelian $3$-group of rank $3$ generated
by the elements $\sigma_0, \sigma_1, \sigma_2$ and let $\omega=e^{2\pi i /3}$. We denote by $\omega^j$
the irreducible representation of $\Z/3$ that sends $[1]_3$ to $\omega^j$. The irreducible representations
of $K$ have the form
\[ \omega^{j_0} \otimes \omega^{j_1} \otimes \omega^{j_2}, \]
where $ 0 \leq j_k \leq 2$. The group $\Z/3$ acts on the representations of $K$ by cyclic permutations of the
superindices $j_k$, and the fixed points of the action have the form $ \omega^j \otimes \omega^j \otimes \omega^j$.
Each representation $ \omega^j \otimes \omega^j \otimes \omega^j$ extends to $S$ and we tensor it with the inflation to $S$ of an irreducible representation $\rho$ 
of $\Z/3$. These are some of the irreducible representations of $S$, which we denote by
\[ \omega^j \otimes \omega^j \otimes \omega^j \otimes \rho. \]
Each irreducible representation $\alpha$ of $K$ which is not fixed by the action of $\Z/3$ gives rise to another irreducible
representation of $S$, namely its induction $\Ind_K^S(\alpha)$. In the appendix we show the character table of $S$, which uses
the following short names for the irreducible representations.
\begin{align*}
a_j & = 1 \otimes 1 \otimes 1 \otimes \omega^j ,\\
b_j & = \omega \otimes \omega\otimes \omega \otimes \omega^j ,\\
c_j & = \omega^2\otimes \omega^2\otimes \omega^2\otimes \omega^j, \\
x_j & = \Ind_K^S( \omega^j\otimes 1\otimes 1 ) ,\\
y_0 & = \Ind_K^S( \omega\otimes \omega \otimes 1), \\
y_1 & = \Ind_K^S( \omega\otimes \omega^2 \otimes 1), \\
y_2 & = \Ind_K^S( \omega^2\otimes \omega \otimes 1), \\
y_3 & = \Ind_K^S( \omega^2\otimes \omega^2 \otimes 1), \\
z_0 & = \Ind_K^S( \omega \otimes \omega \otimes \omega^2), \\
z_1 & =\Ind_K^S( \omega \otimes \omega^2 \otimes \omega^2). 
\end{align*}
Note that $a_0$ is the trivial one-dimensional representation, hence we will
denote it by $1$. The group $\Out_{\Sigma_9}(S)$ is generated by 
\[ c_{(23)(56)(89)}, c_{(47)(58)(69)} \]
and it is easy to find a basis of $R(S)^{\Out_{\Sigma_9}(S)}$ by taking the
sums over the orbits of each irreducible representation of $S$ for the
action of $\Out_{\Sigma_9}(S)$. This basis is
\[ \{ 1, a_1+a_2, b_0+c_0,b_1+b_2+c_1+c_2,x_0+x_1,y_0+y_3,y_1+y_2,z_0+z_1 \}. \]
Let $v_j$ be the $j$th element in this basis for $ j = 1, \ldots, 8$. The restrictions 
to $K$ of these representations are given in the following table.

\begin{table}[H]
\begin{center}
	\begin{tabular}{l|l}
 $A$ & $\Res_K^S(A)$  \\ \hline
$v_1$ 	& $1$ \\
$v_2$	& $2$ \\ 
$v_3$	& $\omega\otimes \omega\otimes \omega + \omega^2\otimes \omega^2\otimes \omega^2$ \\
$v_4$	& $2\omega\otimes \omega\otimes \omega+2\omega^2\otimes \omega^2\otimes \omega^2$ \\ 
$v_5$ & $\omega\otimes 1\otimes 1 + 1\otimes \omega\otimes 1 + 1\otimes 1\otimes \omega+\omega^2\otimes 1\otimes 1 + 1\otimes \omega^2\otimes 1 +1\otimes 1\otimes \omega^2$ \\
$v_6$	& $\omega\otimes \omega \otimes 1 + 1\otimes \omega \otimes \omega + \omega\otimes 1\otimes \omega + \omega^2 \otimes \omega^2 \otimes 1 + 1 \otimes \omega^2 \otimes \omega^2  +\omega^2 \otimes 1 \otimes \omega^2$ \\
$v_7$	& $\omega\otimes \omega^2 \otimes 1 + 1\otimes \omega\otimes \omega^2  +\omega^2 \otimes 1 \otimes \omega + \omega^2\otimes \omega \otimes 1 +1\otimes \omega^2 \otimes \omega +\omega\otimes 1 \otimes \omega^2$ \\
$v_8$	& $ \omega \otimes \omega \otimes \omega^2 + \omega^2 \otimes \omega \otimes \omega + \omega \otimes \omega^2 \otimes \omega+\omega \otimes \omega^2 \otimes \omega^2 + \omega^2 \otimes \omega \otimes \omega^2 + \omega^2 \otimes \omega^2 \otimes \omega$ 
	\end{tabular}
	\caption{Restrictions of representations $v_j$ to $K$}
\end{center} 
\end{table}

The group $\Out_{\Sigma_9}(K)$ is generated by
\[ c_{(23)}, c_{(56)}, c_{(89)}, c_{(14)(25)(36)}, c_{(147)(258)(369)}, \]
but we can ignore the last element because $(1,4,7)(2,5,8)(3,6,9)\in S$.
Let $ v = \sum t_j v_j$ be a $\Sigma_9$--invariant virtual representation. Then
it must satisfy $\Res_K^S(v) = c_{(2,3)}^*\Res_K^S(v)$. Since $v_1$, $v_2$
and $v_5$ satisfy this property, we focus on the rest. For instance, the
coefficient of $\omega^2 \otimes \omega \otimes \omega$ in $\Res_K^S(v)$
is $t_8$ and in $c_{(2,3)}\Res_K^S(v)$ is $t_3+2t_4$, hence
\[ t_3+2t_4 = t_8 . \]
Similarly, the coefficients of $\omega \otimes \omega \otimes 1$ are $t_6$
and $t_7$, hence $t_6=t_7$. By symmetry, considering the coefficients of
the other irreducible representations of $K$ that appear in $\Res_K^S(v_i)$
for $i=3,4,6,7,8$ do not impose new conditions. Therefore $v$ is an integral 
linear combination of the elements
\[ v_1, v_2, v_3+v_8, v_4+2v_8, v_5, v_6+v_7 . \]
By symmetry again, the automorphisms $c_{(56)}$ and $c_{(89)}$ give rise to
the same conditions on the coefficients $t_j$. The automorphism $c_{(14)(25)(36)}$
permutes $\sigma_0$ and $\sigma_1$, and we see clearly that $\Res_K^S(v_j) = c_{(147)(258)(369)}^*\Res_K^S(v_j)$
for all $j$. 

The restrictions to $L$ of the representations we found above are given in the next table.

\begin{table}[H]
\begin{center}
	\begin{tabular}{l|l}
 $A$ & $\Res_L^S(A)$  \\ \hline
$v_1$ 	& $1$ \\
$v_2$	& $1 \otimes \omega + 1 \otimes \omega^2$ \\ 
$v_3+v_8$	& $ 2 + \omega\otimes 1 + \omega\otimes \omega + \omega\otimes \omega^2 + \omega^2\otimes 1 + \omega^2\otimes \omega + \omega^2\otimes \omega^2$ \\
$v_4+2v_8$	& $ 2(1 \otimes \omega + 1 \otimes \omega^2 + \omega\otimes 1 + \omega\otimes \omega + \omega\otimes \omega^2 + \omega^2\otimes 1 + \omega^2\otimes \omega + \omega^2\otimes \omega^2)$ \\ 
$v_5$ & $\omega\otimes 1 + \omega\otimes \omega + \omega\otimes \omega^2 + \omega^2\otimes 1 + \omega^2\otimes \omega + \omega^2\otimes \omega^2$ \\
$v_6+v_7$	& $\omega\otimes 1 + \omega\otimes \omega + \omega\otimes \omega^2 + 2(1+ 1\otimes \omega + 1\otimes \omega^2) + \omega^2\otimes 1 + \omega^2\otimes \omega + \omega^2\otimes \omega^2$ \\
	\end{tabular}
	\caption{Restrictions of representations to $L$}
\end{center} 
\end{table}

The group $\Out_{\Sigma_9}(L)$ is generated by
\[ c_{(23)(56)(89)}, c_{(24)(73)(68)}, c_{(17)(25)(69)}. \]
Let $\rho_i$ be the $i$th representation in the previous table. Since the automorphism
$c_{(23)(56)(89)}$ also defines an element of $\Aut_{\Sigma_9}(S)$, we have 
$\Res_L^S(\rho_i) = c_{(23)(56)(89)}^*\Res_L^S(\rho_i)$ for all $i$. The automorphism
$c_{(24)(73)(68)}$ permutes $\sigma$ and $\tau$. Let $\rho = \sum s_j \rho_j$ be a $\Sigma_9$--invariant
virtual representation. The coefficient of $1 \otimes \omega $ in $\Res_L^S(\rho)$ is
\[ s_2+2s_4+2s_6 , \]
while the coefficient of $1 \otimes \omega $ in $c_{(24)(73)(68)}^*\Res_L^S(\rho)$ is
\[ s_3 + 2s_4 +s_5+s_6 . \]
Hence we must have
\[ s_3 + 2s_4 +s_5+s_6 = s_2+2s_4+2s_6 , \]
from where $ s_2 = s_3+s_5-s_6$. The coefficients of the other representations give us no new restrictions because these representations
are either invariant under the permutation of $\sigma$ and $\tau$, or because $\rho_i$ is invariant under
the transformations that square $\sigma$ or $\tau$. Therefore any $\Sigma_9$--invariant virtual representation
is an integral linear combination of 
\[ v_1, v_2+v_3+v_8, v_4+2v_8, v_2+v_5, v_6+v_7-v_2 . \]
Note that the restrictions of these representations to $L$ are 
\[ 1, \reg_L+1,2(\reg_L-1),\reg_L-1,\reg_L+1 , \]
where $\reg_L$ is the regular representation of $L$. These
representations are invariant with respect to any automorphism of $L$,
in particular with respect to $c_{(17)(25)(69)}^*$. Therefore we have 
shown that
\[ \{ v_1, v_2+v_3+v_8, v_4+2v_8, v_2+v_5, v_6+v_7-v_2 \} \]
is a basis for $R_{\Sigma_9}(S)$ as a free abelian group. If we let $X$, $Y$ and $Z$ be the 
sum of the representations $x_j$, $y_j$ and $z_j$, respectively, this basis is 
\[ \{ 1, a_1+a_2+b_0+c_0+Z,b_1+b_2+c_1+c_2+2Z,a_1+a_2+X,Y-a_1-a_2 \}. \]
But we will work with the alternative basis
\[ \{ 1, P = a_1+a_2+X, Q = b_0+c_0+Z-X, R=b_1+b_2+c_1+c_2+2Z, T=X+Y \} \]
for convenience. Using the character table for $S$, it is easy to determine
the multiplicative structure of the representation ring, namely
\[ R_{\Sigma_9}(S) \cong \Z[P,Q,R,T]/J , \] 
where $J$ is the ideal generated by the elements 
\[ \left. \begin{array}{ll}
P^2 -4-3P-2T, & \qquad \qquad PQ + 2 +2P-Q-2R ,\\
PR -4R-3Q-4T ,& \qquad \qquad PT -2-2P-Q-R-6T, \\
Q^2 -6-4P+Q+2R, & \qquad \qquad QR -4-6P+2Q+R ,\\
QT+2P-Q-R-2T ,& \qquad \qquad R^2 -12-10P-2Q-R-8T, \\
RT-4Q-4R-12T ,& \qquad \qquad T^2-6-6P-4Q-4R-11T.
\end{array} \right. \]
%

\section{Non-unique decompositions for $\Sigma_{p^2}$}
\label{NonUnique}

In the rest of the article, when we say that uniqueness of decomposition into irreducible $G$--invariant 
representations of $S$ is not satisfied, we mean that there exist at least two decompositions which are different
even after reordering. In the language of \cite{CCo}, this says that the monoid $\Rep_G(S)$ is not factorial. In 
this section we show that the $\Sigma_{p^2}$--invariant representations of $\Z/p \wr \Z/p$ do not satisfy uniqueness 
of decomposition into irreducible $\Sigma_{p^2}$--invariant representations if $p$ is an odd prime.
\newline

In the previous section we determined a basis $B=\{1,P,Q,R,T\}$ of $\Sigma_9$--invariant virtual representations of the $3$-Sylow $S$
of $\Sigma_9$. We recall their decomposition as a sum of irreducible representations of $S$ in the next table.

\begin{table}[H]
\begin{center}
	\begin{tabular}{l|l}
 &   \\ \hline
$1$ 	& $1$ \\
$P$	& $a_1+a_2+x_0+x_1$ \\ 
$Q$	& $b_0+c_0+z_0+z_1-x_0-x_1$ \\
$R$	& $b_1+b_2+c_1+c_2+2z_0+2z_1$ \\ 
$T$ & $x_0+x_1+y_0+y_1+y_2+y_3$
\end{tabular}
\caption{Decompositions of the elements of the basis of $R_{\Sigma_9}(\Z/3 \wr \Z/3)$}
\end{center}
\end{table}

Note that $P+Q+T$ is a $\Sigma_9$--invariant representation of $S$ which can be expressed as
a sum of $\Sigma_9$--invariant representations in two ways:
\[ P+(Q+T) = (P+Q)+ T \]
Let us show that the representations $P$, $Q+T$, $P+Q$ and $T$ are irreducible $\Sigma_9$--invariant representations.
Let $V = \sum_{h \in B} m_h h$ be a $\Sigma_9$--invariant subrepresentation of $P$. Since $P$ does not contain $1$, 
$y_j$, $c_j$ as subrepresentations, and each of these representations only appear in one of the elements of $B$, the coefficients $m_1$, $m_R$, $m_Q$
and $m_T$ vanish. Therefore $V = m_P P$ and since $V$ is a subrepresentation of $P$, we must have $V=0$ or $V=P$. The argument for $T$ is similar, 
considering that $T$ does not contain $1$, $a_j$, $b_j$ nor $c_j$ as subrepresentations, and each of these representations only appear in one of the elements of $B$.
Therefore $P$ and $T$ are irreducible $\Sigma_9$--invariant representations.

Let $V = \sum_{h \in B} m_h h$ be a $\Sigma_9$--invariant subrepresentation of $Q+T$. Since $Q+T$ does not contain $1$, $a_j$ nor $c_j$ as
subrepresentations, and each of these representations only appear in one of the elements of $B$, the coefficients $m_1$, $m_P$ and $m_R$
vanish. Hence $V = m_Q Q + m_T T$. Since $V$ is a subrepresentation of $Q+T$, the coefficient of $b_0$, which is $m_Q$, must be $0$ or $1$.
Similarly, the coefficient of $y_0$, which is $m_T$, must be $0$ or $1$. The possibility $V=Q$ can not happen because $V$ is a representation
and the possibility $V=T$ can not happen because $T$ is not a subrepresentation of $Q+T$. The argument for $P+Q$ is similar, considering that
$P+Q$ does not contain $1$, $b_1$ nor $y_1$ as subrepresentations, and each of these representations only appear in one of the elements of $B$.
And we can discard the possibilities $P$ and $Q$, because $P$ is not a subrepresentation of $P+Q$ and $Q$ is not a representation. Therefore
$Q+T$ and $P+Q$ are irreducible $\Sigma_9$--invariant representations.

\begin{remark}
Remark 2.12 in \cite{CCo} contains an irreducibility criterion for fusion-invariant representations
which could have been used here. Indeed, Example 2.5 in \cite{CCo} shows the irreducibility of $P$,
$Q+T$, $P+Q$ and $T$ using this criterion. 
\end{remark}

\begin{proposition}
\label{NonUniqueSigma9}
The $\Sigma_9$--invariant representations of $\Z/3 \wr \Z/3$ do not satisfy uniqueness of factorization 
as a sum of irreducible $\Sigma_9$--invariant representations.
\end{proposition}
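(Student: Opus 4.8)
The plan is to read the proposition off directly from the two facts just established above the statement: the identity $P + (Q+T) = (P+Q) + T$, and the irreducibility of each of the four $\Sigma_9$--invariant representations $P$, $Q+T$, $P+Q$, $T$. Concretely, I would set $W = P + Q + T$ and exhibit the two factorizations $W = P \oplus (Q+T)$ and $W = (P+Q) \oplus T$, each expressing $W$ as a direct sum of two irreducible $\Sigma_9$--invariant representations.

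First I would confirm that $W$, $Q+T$ and $P+Q$ are honest (non-virtual) representations, so that the displayed equalities are genuine decompositions rather than mere relations in the Grothendieck group. Using the table of decompositions of the basis $B$, one computes $Q+T = b_0 + c_0 + y_0 + y_1 + y_2 + y_3 + z_0 + z_1$ and $P+Q = a_1 + a_2 + b_0 + c_0 + z_0 + z_1$, both having non-negative coefficients, and likewise $W = a_1 + a_2 + b_0 + c_0 + x_0 + x_1 + y_0 + y_1 + y_2 + y_3 + z_0 + z_1$. Thus both sides of the identity genuinely decompose the representation $W$ as a direct sum of two irreducibles.

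The only remaining point, and the one step I would take care not to skip, is to verify that these two decompositions are distinct, i.e.\ not a reordering of one another. Because $B = \{1, P, Q, R, T\}$ is a basis of the free abelian group $R_{\Sigma_9}(S)$, each summand is determined by its coordinates, and no summand of the left pair equals a summand of the right pair: we have $P \neq P+Q$ and $Q+T \neq T$ since $Q \neq 0$, while $P \neq T$ and $Q+T \neq P+Q$ since $P - T = a_1 + a_2 - y_0 - y_1 - y_2 - y_3 \neq 0$. Hence the unordered pairs $\{P, Q+T\}$ and $\{P+Q, T\}$ differ, and uniqueness of factorization fails. There is no serious obstacle remaining: all of the heavy lifting was done in computing $R_{\Sigma_9}(S)$ and in proving irreducibility of the four summands, so the proof is essentially an assembly of those ingredients together with this last distinctness check.
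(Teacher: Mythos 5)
Your proposal is correct and follows essentially the same route as the paper, which likewise derives the proposition from the identity $P+(Q+T)=(P+Q)+T$ together with the irreducibility of $P$, $Q+T$, $P+Q$ and $T$ established in the preceding lemmas. Your explicit verification that the summands are honest representations and that the two unordered pairs of factors differ is a welcome (if routine) tightening of what the paper leaves implicit.
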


We will now generalize this to $\Sigma_{p^2}$ where $p$ is an odd prime. Unlike the case $p=3$, we
will not determine the representation ring, but only describe a nonunique factorization inspired by
the factorization found in Proposition \ref{NonUniqueSigma9}. Let $S$ be the $p$-Sylow subgroup of $\Sigma_{p^2}$
introduced in Section \ref{FusionSymmetric}, which is a semidirect product of a rank-$p$ elementary
abelian $p$-group $K$ by $\Z/p$.

Let $\omega = e^{2\pi i/p}$. We denote by $\omega^j$ the irreducible representation of $\Z/p$ that sends $[1]_p$ to $\omega^j$. 
The irreducible representations of $K$ have the form
\[ \omega^{j_0} \otimes \ldots \otimes \omega^{j_{p-1}}, \]
where $ 0 \leq j_k \leq p-1$. The group $\Z/p$ acts on the representations of $K$ by cyclic permutations of the
superindices $j_k$, and the fixed points of the action have the form $ \omega^j \otimes \ldots \otimes \omega^j$.
Eachs representation $ \omega^j \otimes \ldots \otimes \omega^j$ extends to $S$ and we tensor it with the inflation to $S$ of an irreducible representation $\rho$ 
of $\Z/p$. These are some of the irreducible representations of $S$, which we denote by
\[ \omega^j \otimes \ldots \otimes \omega^j \otimes \rho . \]
Each irreducible representation $\alpha$ of $K$ which is not fixed by the action of $\Z/p$ gives rise to another irreducible
representation of $S$, namely its induction $\Ind_K^S(\alpha)$. We will use the following short names:
\begin{align*}
a_i^j & = \omega^j \otimes \ldots \otimes \omega^j \otimes \omega^i , \\
X_i & = \Ind_K^S \left( \sum_{(j_0,\ldots,j_{p-1}) \in B_i} \omega^{j_0} \otimes \ldots \otimes \omega^{j_{p-1}} \right) .
\end{align*}
For $ 1 \leq i < p$, we are considering a set $B_i$ of representatives of the $\Z/p$--orbits of the set $W_i$ of $p$-tuples 
$(j_0,\ldots,j_{p-1}) \in \{0,\ldots,p-1\}^p$ with exactly $i$ nonzero coordinates, under the cyclic permutation action of $\Z/p$. 
For $i=p$, we are taking a set $B_p$ of representatives of the $\Z/p$--orbits of the set
$W_p$ of $p$-tuples $(j_0,\ldots,j_{p-1}) \in \{1,\ldots,p-1\}^p-\{(j,\ldots,j)\}$.

In the following lemmas, we show that certain representations of $S$ are irreducible $\Sigma_{p^2}$--invariant
representations. In order to show the $\Sigma_{p^2}$--invariance, we recall some facts from Section \ref{FusionSymmetric}
first. By Lemma \ref{conjS}, the $S$--conjugacy classes of $S$ fuse into the following $\Sigma_{p^2}$--conjugacy classes. 
For each $1 \leq m \leq p-1$, the elements
\[ \prod_{j=1}^m \sigma_{i_j}^{\alpha_j} \]
with $\alpha_j \in \{1,\ldots,p-1\}$ form the conjugacy class of products of $m$ disjoint $p$-cycles. The elements
\[ \prod_{j=1}^p \sigma_j^{\alpha_j} \tau^b , \]
where $\alpha_1+\ldots+\alpha_p$ or $b$ is a multiple of $p$, form the conjugacy class of products of $p$ disjoint $p$-cycles. The
elements
\[ \prod_{j=1}^p \sigma_j^{\alpha_j} \tau^b , \]
where both $\alpha_1+\ldots+\alpha_p$ and $b$ are not multiples of $p$, form the conjugacy class of $p^2$-cycles. And
of course we have the trivial conjugacy class.

\begin{lemma}
\label{CharacterX}
The characters of the representations $X_j$ satisfy
\[ \chi_{X_i}(\sigma_1 \cdots \sigma_p) = \left\{ \begin{array}{ll}
                                                   \binom{p}{i} (-1)^i & \text{ if } 1 \leq i < p , \\
                                                    & \\
                                                   -p & \text{ if } i= p .\end{array} \right.  \]
\end{lemma}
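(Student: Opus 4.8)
The plan is to compute the character value $\chi_{X_i}(\sigma_1 \cdots \sigma_p)$ directly from the induced character formula, exploiting the fact that $\sigma_1 \cdots \sigma_p$ lies in $K$, which is the subgroup we are inducing from. Since $X_i = \Ind_K^S\big(\sum_{\mathbf{j} \in B_i} \omega^{j_0} \otimes \cdots \otimes \omega^{j_{p-1}}\big)$ and the element $g = \sigma_1 \cdots \sigma_p$ is central in $K$ (indeed $K$ is abelian), the standard formula for an induced character evaluated on an element of the normal subgroup $K$ simplifies considerably. First I would recall that for $g \in K$ with $[S:K] = p$, one has
\[
\chi_{\Ind_K^S \beta}(g) = \sum_{t \in S/K} \chi_\beta(t^{-1} g t),
\]
where the sum runs over coset representatives, which we may take to be $1, \tau, \ldots, \tau^{p-1}$. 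The conjugation by $\tau$ cyclically permutes the coordinates $\sigma_0, \ldots, \sigma_{p-1}$, so $\tau^{-k} g \tau^k = g$ for the fully symmetric element $g = \sigma_1 \cdots \sigma_p = \sigma_0 \cdots \sigma_{p-1}$ (note $\lambda(g)$ has all exponents equal to $1$, so $g$ is fixed by the cyclic action).

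Next, since $\beta = \sum_{\mathbf{j} \in B_i} \omega^{j_0} \otimes \cdots \otimes \omega^{j_{p-1}}$, the value $\chi_\beta(g)$ on $g = \sigma_0 \cdots \sigma_{p-1}$ is $\sum_{\mathbf{j} \in B_i} \omega^{j_0 + \cdots + j_{p-1}}$, because each factor $\omega^{j_k}$ sends the generator $\sigma_k$ to $\omega^{j_k}$. As $g$ is $\tau$-invariant and each $\tau$-orbit in $W_i$ has size exactly $p$ (for $1 \le i < p$ no tuple with exactly $i$ nonzero entries is fixed by the cyclic shift since $0 < i < p$; and for $i = p$ we explicitly removed the constant tuples), each summand $\sum_{t} \chi_\beta(t^{-1} g t)$ contributes $p$ copies of the orbit sum, which exactly reconstitutes the full sum over $W_i$. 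Thus
\[
\chi_{X_i}(g) = \sum_{\mathbf{j} \in W_i} \omega^{j_0 + \cdots + j_{p-1}}.
\]
This reduces the whole computation to evaluating a combinatorial exponential sum over tuples with a prescribed number of nonzero entries.

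The main obstacle, and the real content, is evaluating $\sum_{\mathbf{j} \in W_i} \omega^{\sum_k j_k}$. For $1 \le i < p$ I would argue as follows: choosing which $i$ of the $p$ coordinates are nonzero gives the factor $\binom{p}{i}$, and each nonzero coordinate ranges over $\{1,\ldots,p-1\}$ contributing $\sum_{j=1}^{p-1}\omega^{j} = -1$ (since $\sum_{j=0}^{p-1}\omega^j = 0$), while each zero coordinate contributes $\omega^0 = 1$. This yields $\binom{p}{i}(-1)^i$, matching the first case. For $i = p$, the set $W_p$ consists of all tuples in $\{1,\ldots,p-1\}^p$ minus the $p-1$ constant tuples $(j,\ldots,j)$ with $j \ne 0$. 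The full sum over $\{1,\ldots,p-1\}^p$ is $(-1)^p = -1$ since $p$ is odd, and the removed constant tuples contribute $\sum_{j=1}^{p-1}\omega^{pj} = \sum_{j=1}^{p-1} 1 = p-1$ because $\omega^{pj}=1$; hence $\chi_{X_p}(g) = -1 - (p-1) = -p$, matching the second case. The only subtlety to handle carefully is the orbit-size bookkeeping that justifies passing from the sum over representatives $B_i$ to the sum over all of $W_i$, which hinges on confirming no tuple in $W_i$ is fixed by a nontrivial cyclic shift — immediate for $1 \le i < p$ and guaranteed by the explicit exclusion for $i = p$.
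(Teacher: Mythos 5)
Your proposal is correct, and its first half follows the paper exactly: the paper likewise reduces $\chi_{X_i}(\sigma_1\cdots\sigma_p)$ to the exponential sum $\sum_{(j_0,\ldots,j_{p-1})\in W_i}\omega^{j_0+\cdots+j_{p-1}}$ (you spell out the induced-character and orbit-size bookkeeping that the paper leaves implicit, which is a harmless improvement). Where you genuinely diverge is in evaluating that sum. The paper pulls out the factor $\binom{p}{i}$ and then counts, via an inclusion--exclusion recursion, the number of tuples $(k_1,\ldots,k_i)\in\{1,\ldots,p-1\}^i$ with $k_1+\cdots+k_i=\xi$ for each residue $\xi$, pairing off the contributions of the nonzero residues; the $i=p$ case is handled by a parallel count. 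You instead observe that the sum factors as $\binom{p}{i}\bigl(\sum_{j=1}^{p-1}\omega^{j}\bigr)^{i}=\binom{p}{i}(-1)^{i}$, and for $i=p$ subtract the diagonal contribution $p-1$ from $(-1)^{p}=-1$. Your multiplicative argument is shorter, avoids the solution-counting formulas entirely, and makes the appearance of $(-1)^i$ transparent; the paper's count gives the same answer but carries more combinatorial overhead. The one point worth flagging explicitly in a final write-up is that the $i=p$ case uses that $p$ is odd (for $p=2$ the set $W_p$ is empty), which is the standing hypothesis of the section and which you do invoke.
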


\begin{proof}
Assume first that $ 1 \leq i < p$. Then
\begin{align*}
\chi_{X_i}(\sigma_1 \cdots \sigma_p) & = \chi_{\Ind_K^S \left( \sum_{(j_0,\ldots,j_{p-1}) \in B_i} \omega^{j_0} \otimes \ldots \otimes \omega^{j_{p-1}} \right)}(\sigma_1 \cdots \sigma_p) \\
 & = \sum_{(j_0,\ldots,j_{p-1}) \in W_i} \chi_{\omega^{j_0} \otimes \ldots \otimes \omega^{j_{p-1}}}(\sigma_1 \cdots \sigma_p) \\
 & = \sum_{(j_0,\ldots,j_{p-1}) \in W_i} \omega^{j_0+\ldots+j_{p-1}}.
\end{align*}
Tuples in $W_i$ have exactly $i$ nonzero coordinates. Each $i$-tuple $(k_1,\ldots,k_i)$ in $\{1,\ldots,p-1\}^i$ appears $\binom{p}{i}$ times
as the nonzero coordinates of a tuple in $W_i$, hence the sum above equals
\[ \binom{p}{i} \sum_{(k_1,\ldots,k_i) \in \{1,\ldots,p-1\}^i} \omega^{k_1+\ldots+k_i}. \]
The number of solutions of $k_1+\ldots+k_i = 0$ is
\[ \sum_{j=1}^{i-1} (-1)^{j+1} (p-1)^{i-j}. \]
Each of these solutions contributes one to the sum above. On the other hand, the number of solutions to $k_1+\ldots+k_i \neq 0$ is
\[ (p-1)^i - \sum_{j=1}^{i-1} (-1)^{j+1} (p-1)^{i-j}. \]
There is a bijection between the tuples $(k_1,\ldots,k_i)$ with $k_1+\ldots+k_i = 1$ and the tuples $(r_1,\ldots,r_i)$
with $r_1+\ldots+r_i = \xi$ for any $\xi \in (\Z/p)^{\times}$. Hence for each $\xi$, the number of solutions to $k_1+\ldots+k_i = \xi$ is
\[ (p-1)^{i-1} - \sum_{j=1}^{i-1} (-1)^{j+1} (p-1)^{i-j-1} = \sum_{j=1}^i (-1)^{j+1} (p-1)^{i-j}. \]
Each of these solutions can be paired with a solution for $\xi^2$, a solution for $\xi^3$, and so on, hence all these solutions together
contribute 
\[ - \sum_{j=1}^i (-1)^{j+1} (p-1)^{i-j} \]
to the sum above. Therefore
\begin{align*}
\chi_{X_i}(\sigma_1 \cdots \sigma_p) & = \binom{p}{i} \left( \sum_{j=1}^{i-1} (-1)^{j+1} (p-1)^{i-j} - \sum_{j=1}^i (-1)^{j+1} (p-1)^{i-j} \right) \\
 & = - (-1)^{i+1} \binom{p}{i} \\
 & = (-1)^i \binom{p}{i} .
\end{align*}
as we wanted to prove. For $X_p$, we use a similar argument. 
\begin{align*}
\chi_{X_p}(\sigma_1 \cdots \sigma_p) & = \chi_{\Ind_K^S \left( \sum_{(j_0,\ldots,j_{p-1}) \in B_p} \omega^{j_0} \otimes \ldots \otimes \omega^{j_{p-1}} \right)}(\sigma_1 \cdots \sigma_p) \\
 & = \sum_{(j_0,\ldots,j_{p-1}) \in W_p} \chi_{\omega^{j_0} \otimes \ldots \otimes \omega^{j_{p-1}}}(\sigma_1 \cdots \sigma_p) \\
 & = \sum_{(j_0,\ldots,j_{p-1}) \in W_p} \omega^{j_0+\ldots+j_{p-1}} \\
 & = \sum_{(j_0,\ldots,j_{p-1}) \in \{1,\ldots,p-1\}^p-\Delta} \omega^{j_0+\ldots+j_{p-1}} ,
\end{align*}
where $\Delta$ is the diagonal in $\{1,\ldots,p-1\}^p$. In this case the number of solutions of $j_0+\ldots+j_{p-1} = 0$ is
\[ -(p-1) + \sum_{j=1}^{p-1} (-1)^{j+1} (p-1)^{p-j},  \]
while the rest of summands contribute
\[ -\sum_{j=1}^p (-1)^{j+1} (p-1)^{p-j}. \]
Therefore
\[ \chi_{X_p}(\sigma_1 \cdots \sigma_p) = -(p-1)-(-1)^{p+1} = -p ,\]
concluding the proof.
\end{proof}

\begin{lemma}
\label{SubrepsDeX}
Let $J \subseteq \{1,\ldots,p-1\}$ and let $V$ be a subrepresentation of $\sum_{j \in J} X_j$ such
that $\Res_K^S(V)$ is $\Aut_{\Sigma_{p^2}}(K)$--invariant. Then there exists $J' \subseteq J$ such
that $ V = \sum_{j \in J'} X_j$.
\end{lemma}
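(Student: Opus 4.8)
The plan is to use semisimplicity to turn the statement into a combinatorial fact about the action of $\Aut_{\Sigma_{p^2}}(K)$ on the dual group $\hat K$. First I would record the structure of $\sum_{j \in J} X_j$ as an $S$-representation. By Clifford theory, since $K \trianglelefteq S$ with $S/K \cong \Z/p$ of prime order, for each $\Z/p$-orbit $\mathcal O$ of a non-fixed irreducible representation of $K$ the induction $\Ind_K^S$ of any representative is irreducible, and distinct orbits yield non-isomorphic inductions. Every tuple with exactly $i$ nonzero coordinates for $1 \leq i \leq p-1$ is non-fixed under the cyclic $\Z/p$-action, the only fixed points being the constant tuples (which have $0$ or $p$ nonzero entries). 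Hence I can write $\sum_{j \in J} X_j = \sum_{\mathcal O} \Ind_K^S(\mathcal O)$, the sum running over all $\Z/p$-orbits $\mathcal O$ contained in $\bigsqcup_{j \in J} W_j$, and this is a multiplicity-free decomposition into irreducible $S$-representations. Consequently any subrepresentation $V$ is exactly the sum of those $\Ind_K^S(\mathcal O)$ it contains, say over a set $\mathcal S$ of orbits.

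Next I would restrict to $K$. Using $\Res_K^S \Ind_K^S(\mathcal O) = \sum_{t \in \mathcal O}(\text{irreducible indexed by } t)$, the restriction of $\sum_{j\in J} X_j$ is the multiplicity-free sum over all of $\bigsqcup_{j \in J} W_j$, while $\Res_K^S V$ is the multiplicity-free sum over the $\Z/p$-invariant subset $T = \bigsqcup_{\mathcal O \in \mathcal S} \mathcal O$ of $\bigsqcup_{j \in J} W_j$. Because everything is multiplicity-free, the set $T$ determines $V$: knowing which irreducibles of $K$ occur recovers $\mathcal S$, and hence $V$. So it suffices to prove that the hypothesis forces $T$ to be a union of the full layers $W_j$.

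This is where the orbit computation enters. Since $K$ is abelian we have $\Aut_{\Sigma_{p^2}}(K) = \Out_{\Sigma_{p^2}}(K) \cong \F_p^\times \wr \Sigma_p$, as established in the proof of the proposition above. Identifying irreducibles of $K$ with $\hat K \cong (\Z/p)^p$, the factor $(\F_p^\times)^p$ scales each coordinate independently by an arbitrary nonzero scalar (leaving zero coordinates zero), while $\Sigma_p$ permutes the coordinates; therefore the $\Aut_{\Sigma_{p^2}}(K)$-orbit of a tuple depends only on its number of nonzero coordinates, and the orbits are $\{0\}$, the layers $W_1,\ldots,W_{p-1}$, and the top layer $\{1,\ldots,p-1\}^p$. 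The invariance hypothesis on $\Res_K^S V$ says exactly that its multiplicity-free support $T$ is $\Aut_{\Sigma_{p^2}}(K)$-stable, so $T$ is a union of orbits; since $T \subseteq \bigsqcup_{j \in J} W_j$ with $J \subseteq \{1,\ldots,p-1\}$, it must be $\bigsqcup_{j \in J'} W_j$ for some $J' \subseteq J$. Unwinding the identification $\sum_{j \in J'} X_j = \sum_{\mathcal O \subseteq \bigsqcup_{j \in J'} W_j} \Ind_K^S(\mathcal O)$ then gives $V = \sum_{j \in J'} X_j$.

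The main obstacle is the orbit computation, namely seeing that for $1 \leq j \leq p-1$ the single layer $W_j$ forms one $\Aut_{\Sigma_{p^2}}(K)$-orbit. This relies essentially on the $\F_p^\times$ factors acting on the coordinates independently, so that any nonzero value in a fixed position can be rescaled to any other, together with the restriction $J \subseteq \{1,\ldots,p-1\}$ which keeps $T$ away from the top layer; on that top layer the constant tuples (the fixed points giving the $a_i^j$-type representations rather than inductions) merge with $W_p$ into a single orbit and the clean layer-by-layer conclusion would fail.
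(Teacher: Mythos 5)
Your proof is correct and follows essentially the same route as the paper's: restrict to $K$, observe that $\Aut_{\Sigma_{p^2}}(K)\cong\F_p^{\times}\wr\Sigma_p$ contains the coordinate permutations and independent coordinate scalings and hence acts transitively on each layer $W_j$, and conclude that the support of $\Res_K^S(V)$ is an all-or-nothing union of layers. The only difference is that you spell out, via Clifford theory, the multiplicity-free decomposition of $\sum_{j\in J}X_j$ and why $V$ is determined by its restriction to $K$ --- details the paper leaves implicit.
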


\begin{proof}
Note that 
\[ \Res_K^S(V) = \sum_{j \in J} \sum_{(j_0,\ldots,j_{p-1}) \in R_j \subseteq W_j} \omega^{j_0} \otimes \ldots \otimes \omega^{j_{p-1}} \]
for certain (possibly empty) subsets $R_j$. Since $\Aut_{\Sigma_{p^2}})K)$ contains the automorphisms that
permute the elements $\sigma_i$ and the automorphism that sends $\sigma_i$ to $\sigma_i^j$ for each $i$
and each $j \in \{1,\ldots,p-1\}$, the set $R_j$ can only be $W_j$ or the empty set.
\end{proof}

\begin{lemma}
\label{RepA}
The representation $ A = a_1^0 + \ldots + a_{p-1}^0 + X_1 $ is an irreducible $\Sigma_{p^2}$--invariant
representation.
\end{lemma}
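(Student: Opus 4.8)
The plan is to prove the two assertions separately: that $A$ is $\Sigma_{p^2}$--invariant, and that it has no proper nonzero $\Sigma_{p^2}$--invariant summand. For invariance I would use the criterion recalled in Section \ref{FusionRepresentations}: a representation of $S$ is $\Sigma_{p^2}$--invariant if and only if its restriction to each centric radical subgroup $P$ is invariant under $\Out_{\Sigma_{p^2}}(P)$. By Section \ref{FusionSymmetric} the relevant subgroups are $S$, $K$ and $L$, together with the explicit generators of their outer automorphism groups given there.

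First I would compute the three restrictions. Each $a_i^0$ is trivial on $K$, and $\Res_K^S(X_1) = \sum_{(j_0,\ldots,j_{p-1}) \in W_1} \omega^{j_0}\otimes\ldots\otimes\omega^{j_{p-1}}$, so $\Res_K^S(A)$ is $(p-1)$ copies of the trivial character plus the sum over all of $W_1$; since the coordinate permutations and scalings generating $\Out_{\Sigma_{p^2}}(K) \cong \F_p^\times \wr \Sigma_p$ preserve $W_1$, this is invariant. For $L$, note $K \triangleleft S$, $L \cap K = \langle \sigma_0\cdots\sigma_{p-1}\rangle$ and $LK = S$, so there is a single double coset and Mackey's formula expresses $\Res_L^S(X_1)$ as an induction from $L \cap K$; a direct computation then yields $\Res_L^S(A) = \reg_L - 1$, which is invariant under every automorphism of $L$ and in particular under $\Out_{\Sigma_{p^2}}(L) \cong GL_2(\F_p)$. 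Finally, the representatives of $\Out_{\Sigma_{p^2}}(S) \cong \F_p^\times \oplus \F_p^\times$ preserve $K$; they permute the one--dimensional representations $a_1^0,\ldots,a_{p-1}^0$ (which are exactly the nontrivial characters of $S$ trivial on $K$) among themselves, and they fix $X_1$ because they permute the $\Z/p$--orbits inside $W_1$. Hence $A$ is invariant under all three, proving $\Sigma_{p^2}$--invariance.

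For irreducibility the decisive point is that the restriction to $L$ already settles everything. Let $V$ be a $\Sigma_{p^2}$--invariant subrepresentation of $A$. Then $\Res_L^S(V)$ is $\Out_{\Sigma_{p^2}}(L) \cong GL_2(\F_p)$--invariant and is contained in $\Res_L^S(A) = \reg_L - 1$, in which every nontrivial character of $L$ appears exactly once and the trivial character not at all. Since $GL_2(\F_p)$ acts transitively on the $p^2-1$ nontrivial characters of $L$, the only $GL_2(\F_p)$--invariant subrepresentations of $\reg_L - 1$ are $0$ and $\reg_L - 1$; comparing dimensions (and using $\dim A = p^2 - 1$) forces $V = 0$ or $V = A$. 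One could instead combine Lemma \ref{SubrepsDeX}, which forces the $X_1$--part of $V$ to be $0$ or $X_1$, with the transitive $\Out_{\Sigma_{p^2}}(S)$--action on the $a_i^0$, and then discard the two mixed possibilities $a_1^0+\ldots+a_{p-1}^0$ and $X_1$ precisely because their restrictions to $L$ are not $GL_2(\F_p)$--invariant.

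The main obstacle I anticipate is the computation of $\Res_L^S(X_1)$: one has to run the double--coset analysis carefully and recognize the outcome as $\reg_L - 1$. This is also the conceptual heart of the statement, since it is exactly the fusion recorded by $L$, whose outer automorphism group is the full $GL_2(\F_p)$, that merges the characters supported on the line $\langle \sigma_0\cdots\sigma_{p-1}\rangle$ with the remaining nontrivial characters; this single phenomenon is simultaneously responsible for the invariance of $A$ and for the absence of a proper invariant summand. The character value $\chi_{X_1}(\sigma_1\cdots\sigma_p)$ from Lemma \ref{CharacterX} is not required for this lemma, though it supports an alternative, purely character--theoretic verification of invariance against the $\Sigma_{p^2}$--conjugacy classes listed at the start of this section.
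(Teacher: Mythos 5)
Your proof is correct, but it takes a genuinely different route from the paper on both halves of the statement. For invariance, the paper computes $\chi_A$ directly on each of the fused $\Sigma_{p^2}$--conjugacy classes listed at the start of the section (obtaining $(p-1)+(p-1)(p-m)-m$ on products of $m$ disjoint $p$-cycles in $K$ and $-1$ on elements outside $K$, which agree where the classes merge), whereas you invoke the Alperin-type criterion and check $\Out$-invariance of the restrictions to the centric radicals $S$, $K$ and $L$ -- the method the paper itself uses in Section \ref{Sigma4y9} but not here. For irreducibility, the paper decomposes a subrepresentation $V$ as $V'+V''$, uses the fusion of $\tau$ with $\sigma_0\cdots\sigma_{p-1}$ to pin down $V'$, applies Lemma \ref{SubrepsDeX} to pin down $V''$, and then kills the two mixed candidates $a_1^0+\ldots+a_{p-1}^0$ and $X_1$ by explicit character values; your observation that $\Res_L^S(A)=\reg_L-1$ (which checks out: $\Res_L^S(X_1)$ is the sum of all characters of $L$ nontrivial on $L\cap K$, each once) together with the transitivity of $GL_2(\F_p)=\Out_{\Sigma_{p^2}}(L)$ on the nontrivial characters of $L$ and a dimension count replaces that entire case analysis in one stroke, and in particular makes Lemmas \ref{CharacterX} and \ref{SubrepsDeX} unnecessary for this lemma. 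The trade-off is that your argument leans on the classification of centric radicals and the identification $\Out_{\Sigma_{p^2}}(L)\cong GL_2(\F_p)$ from Section \ref{FusionSymmetric}, while the paper's is a self-contained character computation whose auxiliary lemmas are reused for the representations $B$, $C$ and $D$; your $L$-restriction trick is specific to $A$ (the analogous restrictions of $B$ and $D$ are not multiplicity-free), which is presumably why the paper sets up the heavier machinery.
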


\begin{proof}
We compute the character of $A$ on nontrivial elements of $S$. Let $\alpha_j \in \{1, \ldots, p-1 \}$ for $j=1,\ldots,m$
with $m \leq p$.
\begin{align*} 
\chi_A \left( \prod_{j=1}^m \sigma_{i_j}^{\alpha_j} \right) & = (p-1)+ \sum_{k=1}^{p-1} \Ind_K^S \chi_{\omega^k \otimes 1 \otimes \ldots \otimes 1} \left( \prod_{j=1}^m \sigma_{i_j}^{\alpha_j} \right) \\
          & = (p-1) + \sum_{k=1}^{p-1} [ \omega^{\alpha_1 k} + \ldots + \omega^{\alpha_m k} + (p-m) ] \\
          & = (p-1)+(p-1)(p-m)+ \sum_{k=1}^{p-1} m \omega^k \\
          & = (p-1)+(p-1)(p-m)-m.
\end{align*}
We see that the result only depends on $m$, not on the exponents $\alpha_j$. In particular, note that this equals $-1$ when $m=p$. Now let $\alpha_j \in \{0,\ldots,p-1 \}$
and $b \in \{1,\ldots,p-1\}$. 
\begin{align*} 
\chi_A \left( \prod_{j=1}^{p-1} \sigma_j^{\alpha_j} \tau^b \right) & = \chi_{a_1^0+\ldots+a_{p-1}^0} \left( \prod_{j=1}^{p-1} \sigma_j^{\alpha_j} \tau^b \right)  \\
          & = \omega+\ldots+\omega^{p-1} \\
          & = -1.
\end{align*}
Hence $A$ is $\Sigma_{p^2}$--invariant. 

Let $V$ be an irreducible $\Sigma_{p^2}$--invariant subrepresentation of $A$. As a representation of 
$S$, it must have the form $ V' + V''$, where $V'$ is a sum of representations of the form $a_j^0$ and $V''$ is a sum of representations
of the form $ \Ind_K^S(\omega^j \otimes 1 \otimes \ldots \otimes 1)$. Since the character of $V''$ vanishes on $S-K$, we have $\chi_{V'}(\tau)=\chi_{V'}(\tau^j)$
for all $j$. That is, $V'$ restricted to the subgroup generated by $\tau$ is $\Sigma_p$--invariant, hence $V'=0$ or $V'=a_1^0+\ldots+a_{p-1}^0$. Since $\Res_K^S(V')$
is a trivial representation, the representation $V''$ is $\Aut_{\Sigma_{p^2}}(K)$--invariant. By Lemma \ref{SubrepsDeX}, either
$V''=0$ or $V''=X_1$. Now we discard two of the possibilities for $V$. The representation $ a_1^0+\ldots+a_{p-1}^0$ is not $\Sigma_{p^2}$--invariant because 
\[ \chi_{a_1^0+\ldots+a_{p-1}^0}(\tau) = \sum_{j=1}^{p-1} \omega^j = -1 \neq p-1 = \chi_{a_1^0+\ldots+a_{p-1}^0}(\sigma_0 \cdots \sigma_{p-1}) . \]
The representation $X_1$ is not  $\Sigma_{p^2}$--invariant because
\[ \chi_{X_1}(\tau) = 0 \neq -p = \chi_{X_1}(\sigma_0 \cdots \sigma_{p-1}). \]
Therefore $V=A$ or $V=0$, and so $A$ is an irreducible $\Sigma_{p^2}$--invariant representation.
\end{proof}

\begin{lemma}
The representation $ B' = a_0^1 + \ldots + a_0^{p-1} + X_2 + \ldots + X_{p-1} + X_p $ is $\Sigma_{p^2}$--invariant.
\end{lemma}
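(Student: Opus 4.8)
The plan is to verify $\Sigma_{p^2}$--invariance directly through characters, using the criterion from Section \ref{FusionRepresentations}: a representation of $S$ is $\Sigma_{p^2}$--invariant precisely when its character is constant on the intersection of each $\Sigma_{p^2}$--conjugacy class with $S$. By the enumeration of these classes recalled before Lemma \ref{CharacterX} (which follows from Lemma \ref{ciclos} and Lemma \ref{conjS}), it therefore suffices to evaluate $\chi_{B'}$ on a product of $m$ disjoint $p$-cycles for each $1 \le m \le p-1$, on a product of $p$ disjoint $p$-cycles, and on a $p^2$-cycle, and to check in each case that the value does not depend on the chosen representative.

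First I would record two reductions. Since $K \trianglelefteq S$ has index $p$, conjugation in $S$ fixes cosets of $K$, so no element of $S - K$ is $S$--conjugate into $K$; by the induced character formula, each $\chi_{X_i}$ then vanishes on $S - K$. Second, I would compute the restriction $\Res_K^S(B')$ as a genuine sum of irreducible characters of $K$. The irreducible characters of $K$ partition into the trivial one, the sets $W_1, \ldots, W_{p-1}$ (tuples with exactly $i$ nonzero entries), the diagonal fixed points, and $W_p$ (all entries nonzero and non-diagonal). Since $a_0^1 + \ldots + a_0^{p-1}$ restricts to the sum of the diagonal characters while each $X_i$ restricts to $\sum_{w \in W_i}\chi_w$, one obtains
\[ \Res_K^S(B') = \reg_K - 1 - \Res_K^S(X_1), \]
namely the sum of all characters of $K$ with at least two nonzero entries.

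This identity does the heavy lifting on $K$. For a nontrivial $g \in K$ we have $\chi_{\reg_K}(g) = 0$, so $\chi_{B'}(g) = -1 - \chi_{X_1}(g)$, and the value $\chi_{X_1}(g) = (p-1)(p-m) - m$ for a product of $m$ disjoint $p$-cycles was already computed inside the proof of Lemma \ref{RepA}. Thus $\chi_{B'}$ depends only on $m$ on $K$, which settles the classes of $m$ disjoint $p$-cycles for $1 \le m \le p-1$ and gives $\chi_{B'}(g) = p-1$ when $m = p$. On $S - K$ only the one-dimensional summands $a_0^k$ survive; since each sends $\prod_j \sigma_j^{\alpha_j}\tau^b$ to $\omega^{k(\alpha_0 + \ldots + \alpha_{p-1})}$, I find $\chi_{B'}\bigl(\prod_j \sigma_j^{\alpha_j}\tau^b\bigr) = \sum_{k=1}^{p-1}\omega^{k\sum_j \alpha_j}$, which equals $p-1$ if $\sum_j \alpha_j \equiv 0 \pmod p$ and $-1$ otherwise.

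By the conjugacy description, a $p^2$-cycle has $\sum_j \alpha_j \not\equiv 0$, giving the constant value $-1$, while a product of $p$ disjoint $p$-cycles lying in $S - K$ has $\sum_j \alpha_j \equiv 0$, giving $p-1$. The step I expect to require the most care is exactly this last class: its representatives lie partly in $K$ (all $\alpha_j \neq 0$) and partly in $S - K$ (with $\sum_j \alpha_j \equiv 0$), and $\Sigma_{p^2}$--invariance forces the two independent computations to agree. They do, both yielding $p - 1$, so $\chi_{B'}$ is constant on every $\Sigma_{p^2}$--conjugacy class and $B'$ is $\Sigma_{p^2}$--invariant.
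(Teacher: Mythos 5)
Your proposal is correct and follows essentially the same route as the paper: both compute $\chi_{B'}$ on $K$ via the identity $\Res_K^S(B')=\reg_K-1-\Res_K^S(X_1)$ (reusing the value of $\chi_{X_1}$ on a product of $m$ disjoint $p$-cycles from the proof of Lemma \ref{RepA}) and on $S-K$ via the summands $a_0^k$ alone, then check constancy on the $\Sigma_{p^2}$--classes. Your write-up is in fact slightly more explicit than the paper's about the one point that matters --- that the class of products of $p$ disjoint $p$-cycles meets both $K$ and $S-K$ and the two computations must (and do) agree at $p-1$.
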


\begin{proof}
We compute the character of $B'$ on nontrivial elements of $S$. Let $\alpha_j \in \{1, \ldots, p-1 \}$ for $j=1,\ldots,m$
with $m \leq p$.
\begin{align*} 
\chi_{B'} \left( \prod_{j=1}^m \sigma_{i_j}^{\alpha_j} \right) & = \sum_{(k_1,\ldots,k_p) \in W_2 \cup \ldots \cup W_p} \chi_{\omega_1^{k_1} \otimes \ldots \otimes \omega_p^{k_p}} \left( \prod_{j=1}^m \sigma_{i_j}^{\alpha_j} \right) \\
          & = \chi_{\reg_K-1-X_1}\left( \prod_{j=1}^m \sigma_{i_j}^{\alpha_j} \right) \\
          & = -1 - (p-1)(p-m)+m .
\end{align*}
Note that this equals $p-1$ when $m=p$. On the other hand, given $\alpha_j \in \{0,\ldots,p-1 \}$ and $b \in \{1,\ldots,p-1\}$. 
\begin{align*} 
\chi_{B'} \left( \prod_{j=1}^{p-1} \sigma_j^{\alpha_j} \tau^b \right) & = \chi_{a_0^1+\ldots+a_0^{p-1}} \left( \prod_{j=1}^{p-1} \sigma_j^{\alpha_j} \tau^b \right)  \\
          & = \omega^{\alpha_0+\ldots+\alpha_{p-1}} + \omega^{2(\alpha_0+\ldots+\alpha_{p-1})}+\ldots+\omega^{(p-1)(\alpha_0+\ldots+\alpha_{p-1})} \\
          & = \left\{ \begin{array}{ll}
                       p-1 & \text{ if } \alpha_0 + \ldots + \alpha_{p-1} \equiv 0 \Mod p ,\\
                       -1 & \text{ otherwise. } \end{array} \right.  
\end{align*}
This shows that $B'$ is $\Sigma_{p^2}$--invariant.
\end{proof}

\begin{lemma}
The representation $X_j+X_{p-j}$ is an irreducible $\Sigma_{p^2}$--invariant
representation for each $1 \leq j \leq p-1$.
\end{lemma}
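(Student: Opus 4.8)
The plan is to verify $\Sigma_{p^2}$--invariance first and then irreducibility, in both cases leaning on Lemma \ref{CharacterX} and on the fact that each $X_i$ is induced from the normal subgroup $K$. Since $K$ is normal of index $p$ in $S$ and $X_i=\Ind_K^S(-)$, the character $\chi_{X_i}$ vanishes on all of $S-K$; in particular it is constant (equal to $0$) on the class of $p^2$--cycles and on the part lying in $S-K$ of any other class. On $K$ we have $\Res_K^S(X_i)=\sum_{(j_0,\ldots,j_{p-1})\in W_i}\omega^{j_0}\otimes\cdots\otimes\omega^{j_{p-1}}$, and because $W_i$ is determined solely by the number of nonzero coordinates, this restriction is invariant under the coordinate permutations and rescalings generating $\Aut_{\Sigma_{p^2}}(K)$; as these automorphisms are realized by conjugation in $\Sigma_{p^2}$, the character $\chi_{X_i}$ is constant on each $\Sigma_{p^2}$--class intersected with $K$. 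Consulting the list of $\Sigma_{p^2}$--conjugacy classes recalled before Lemma \ref{CharacterX}, the classes of products of $m$ disjoint $p$--cycles for $1\le m\le p-1$ lie entirely in $K$ and so cause no trouble, and the only class mixing $K$ with $S-K$ is that of products of $p$ disjoint $p$--cycles. There the elements outside $K$ contribute $0$ while those inside $K$ contribute the common value $\chi_{X_i}(\sigma_1\cdots\sigma_p)$, so $X_j+X_{p-j}$ is invariant precisely when this value vanishes for the sum.

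The needed cancellation comes straight from Lemma \ref{CharacterX}. For $1\le j\le p-1$ one also has $1\le p-j\le p-1$, and since $p$ is odd we have $(-1)^{p-j}=-(-1)^j$ together with $\binom{p}{j}=\binom{p}{p-j}$, so
\[ \chi_{X_j+X_{p-j}}(\sigma_1\cdots\sigma_p)=\binom{p}{j}(-1)^j+\binom{p}{p-j}(-1)^{p-j}=\binom{p}{j}\bigl((-1)^j-(-1)^j\bigr)=0. \]
Hence $X_j+X_{p-j}$ is $\Sigma_{p^2}$--invariant.

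For irreducibility I would first note that the $S$--representation complement of any $\Sigma_{p^2}$--invariant subrepresentation is again invariant, since its character is the difference of two characters constant on $\Sigma_{p^2}$--classes; it therefore suffices to prove that the only invariant subrepresentations of $X_j+X_{p-j}$ are $0$ and the whole representation. Let $V$ be such a subrepresentation. Then $\Res_K^S(V)$ is $\Aut_{\Sigma_{p^2}}(K)$--invariant, so Lemma \ref{SubrepsDeX}, applied with $J=\{j,p-j\}\subseteq\{1,\ldots,p-1\}$, forces $V=\sum_{i\in J'}X_i$ for some $J'\subseteq\{j,p-j\}$. The remaining possibilities $V=X_j$ and $V=X_{p-j}$ must be discarded: neither $X_j$ nor $X_{p-j}$ is $\Sigma_{p^2}$--invariant, because each has character $0$ on the elements of $S-K$ in the class of products of $p$ disjoint $p$--cycles but character $\binom{p}{i}(-1)^i\neq 0$ on $\sigma_1\cdots\sigma_p$ in that same class. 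Therefore $J'=\emptyset$ or $J'=\{j,p-j\}$, that is $V=0$ or $V=X_j+X_{p-j}$, which is the claimed irreducibility.

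The only genuinely delicate point is the binomial cancellation on the class of products of $p$ disjoint $p$--cycles, where both the oddness of $p$ and the pairing $j\leftrightarrow p-j$ enter; everything else is a formal consequence of Lemma \ref{CharacterX}, Lemma \ref{SubrepsDeX}, the vanishing of the induced characters off $K$, and the recalled description of the $\Sigma_{p^2}$--conjugacy classes.
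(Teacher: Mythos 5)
Your proposal is correct and follows essentially the same route as the paper: $\Sigma_{p^2}$--invariance via the vanishing of induced characters off $K$ together with the cancellation $\binom{p}{j}(-1)^j+\binom{p}{p-j}(-1)^{p-j}=0$ from Lemma \ref{CharacterX}, and irreducibility by applying Lemma \ref{SubrepsDeX} and ruling out $X_j$ and $X_{p-j}$ by comparing their characters on $\sigma_1\cdots\sigma_p$ and on an element of $S-K$ in the same $\Sigma_{p^2}$--class. You merely make explicit a few steps the paper leaves implicit (the reduction to the single mixed conjugacy class and the binomial identity), which is fine.
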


\begin{proof}
For $ 1 \leq j \leq p-1$, we have
\[ \Res^S_K(X_j) = \sum_{(j_0,\ldots,j_{p-1}) \in W_j} \omega^{j_0} \otimes \ldots \otimes \omega^{j_{p-1}}, \]
which is clearly $\Aut_{\Sigma_{p^2}}(K)$--invariant. By Lemma \ref{CharacterX}, we have
\[ \chi_{X_j+X_{p-j}}(\sigma_1 \cdots \sigma_p) = 0. \]
Since the characters of $X_j$ and $X_{p-j}$ vanish on $S-K$, we conclude that $X_j+X_{p-j}$ is $\Sigma_{p^2}$--invariant.

Let $V$ be an irreducible $\Sigma_{p^2}$--invariant subrepresentation of $X_j+X_{p-j}$. By Lemma \ref{SubrepsDeX}, $V$ can only be 
$0$, $X_j$, $X_{p-j}$ or $X_j + X_{p-j}$. But $X_j$ and $X_{p-j}$ are not $\Sigma_{p^2}$--invariant because
their characters on $\sigma_1 \cdots \sigma_p$ are nonzero, but zero on $\tau$. Hence $V=X_j+X_{p-j}$ or $V=0$, and so $X_j+X_{p-j}$
is an irreducible $\Sigma_{p^2}$--invariant representation.
\end{proof}

We let $B = a_0^1 + \ldots + a_0^{p-1} + X_{p-1} + X_p$ and $C = X_1 + X_{p-1}$. By the previous lemma, $C$ is an irreducible
$\Sigma_{p^2}$--invariant representation. By the previous two lemmas, $B$ is $\Sigma_{p^2}$--invariant and we will show now that
it is irreducible.

\begin{lemma}
The representation $B = a_0^1 + \ldots + a_0^{p-1} + X_{p-1} + X_p$ is an irreducible $\Sigma_{p^2}$--invariant representation.
\end{lemma}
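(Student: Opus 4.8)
The plan is to mimic the irreducibility arguments used for $A$ and for $X_j+X_{p-j}$, combining the subrepresentation analysis on $K$ via Lemma \ref{SubrepsDeX} with the behaviour of characters on the $\tau$-type elements of $S-K$. Since $B$ has already been shown $\Sigma_{p^2}$--invariant, only irreducibility remains. First I would take an irreducible $\Sigma_{p^2}$--invariant subrepresentation $V \leq B$ and split it, as a representation of $S$, as $V = V' + V''$, where $V'$ collects the summands of the form $a_0^i$ and $V''$ collects the induced summands $X_{p-1}$ and $X_p$. The key input is that the induced pieces have characters vanishing on $S-K$, so on the subgroup $\langle \tau \rangle$ the character of $V$ agrees with that of $V'$; as in Lemma \ref{RepA}, invariance forces $V'$ to be $\Sigma_p$--invariant as a representation of $\langle \tau \rangle$, which leaves only $V'=0$ or $V' = a_0^1 + \ldots + a_0^{p-1}$.

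Next I would pin down $V''$. Since $\Res_K^S(V')$ is a sum of copies of the trivial representation (each $a_0^i$ restricts trivially to $K$), the restriction $\Res_K^S(V'')$ must itself be $\Aut_{\Sigma_{p^2}}(K)$--invariant, so Lemma \ref{SubrepsDeX} applies to $V''$ as a subrepresentation of $X_{p-1}+X_p$. This yields that $V''$ is one of $0$, $X_{p-1}$, $X_p$, or $X_{p-1}+X_p$. Combining the two analyses gives a short finite list of candidates for $V$, and the remaining task is to discard every candidate except $V=0$ and $V=B$ by evaluating characters on the representative elements $\tau$ and $\sigma_0 \cdots \sigma_{p-1}$, which lie in distinct $S$--conjugacy classes but must have equal character values for a $\Sigma_{p^2}$--invariant representation.

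The character computations are where the work concentrates, and they parallel the discarded cases in the two preceding lemmas. On $\sigma_1 \cdots \sigma_p$ I would use Lemma \ref{CharacterX}, which gives $\chi_{X_{p-1}}(\sigma_1 \cdots \sigma_p) = \binom{p}{p-1}(-1)^{p-1} = p$ (since $p$ is odd) and $\chi_{X_p}(\sigma_1 \cdots \sigma_p) = -p$, so these two partially cancel; meanwhile the abelian part $a_0^1 + \ldots + a_0^{p-1}$ contributes its own value on products of $p$ cycles versus on $\tau$. The point is that each of the proper candidate subrepresentations fails to have matching character values on $\tau$ and on $\sigma_0 \cdots \sigma_{p-1}$: the induced-only pieces $X_{p-1}$ and $X_p$ vanish on $\tau$ but are nonzero on $\sigma_1 \cdots \sigma_p$, and the abelian-only piece $a_0^1 + \ldots + a_0^{p-1}$ has value $-1$ on a product of $p$ disjoint cycles with $\lambda \not\equiv 0$ yet value $p-1$ on $\tau$ (or vice versa depending on the congruence class). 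Verifying that exactly the full sum $B$ balances these contributions is the crux.

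I expect the main obstacle to be bookkeeping the character of $X_p$ carefully, since $X_p$ is defined by removing the diagonal from $\{1,\ldots,p-1\}^p$ and its character behaviour on the various conjugacy classes of products of disjoint cycles is the most delicate; the inclusion-exclusion already carried out in Lemma \ref{CharacterX} handles $\sigma_1 \cdots \sigma_p$, but I would need the analogous evaluation on products of $m<p$ cycles and on $\tau$-type elements to rule out mixed candidates such as $V = a_0^1 + \ldots + a_0^{p-1} + X_p$ or $V = a_0^1 + \ldots + a_0^{p-1} + X_{p-1}$. Once those finitely many mixed and pure candidates are eliminated by a character mismatch on a single pair of classes, the only surviving $\Sigma_{p^2}$--invariant subrepresentations are $0$ and $B$, establishing irreducibility.
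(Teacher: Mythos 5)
There is a genuine gap, and it stems from a misreading of the notation $a_i^j=\omega^j\otimes\cdots\otimes\omega^j\otimes\omega^i$: the superscript indexes the character of $K$ and the subscript the character of the quotient $\Z/p$. Thus $a_0^i$ restricts to $K$ as the \emph{diagonal} character $\omega^i\otimes\cdots\otimes\omega^i$, not trivially, and restricts trivially to $\langle\tau\rangle$. This breaks your argument at two places. First, your reduction of $V'$ via $\Sigma_p$--invariance on $\langle\tau\rangle$ is vacuous here (every $a_0^i$ is trivial on $\tau$), so it does not force $V'=0$ or $V'=a_0^1+\cdots+a_0^{p-1}$; in particular the candidate $V=a_0^1+\cdots+a_0^{p-1}$ is never eliminated by your proposed test, since its character equals $p-1$ on both $\tau$ and $\sigma_0\cdots\sigma_{p-1}$ (one needs a different pair of $\Sigma_{p^2}$--conjugate elements, e.g.\ $\sigma_0\cdots\sigma_{p-1}$ versus $\sigma_0^2\sigma_1\cdots\sigma_{p-1}$). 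Second, the claim that $\Res_K^S(V'')$ is separately $\Aut_{\Sigma_{p^2}}(K)$--invariant is unjustified, and Lemma \ref{SubrepsDeX} is stated only for $J\subseteq\{1,\ldots,p-1\}$, so it does not apply to $X_p$; indeed its conclusion is false for $X_p$, because $\Res_K^S(X_p)$ omits the diagonal while the $\Aut_{\Sigma_{p^2}}(K)$--orbit of any tuple in $W_p$ is all of $\{1,\ldots,p-1\}^p$ including the diagonal.

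The missing idea — which is the crux of the paper's proof — is precisely this orbit linkage: $\Aut_{\Sigma_{p^2}}(K)$--invariance of $\Res_K^S(V)$ couples the diagonal characters (coming from the $a_0^j$) to the non-diagonal all-nonzero characters (coming from $X_p$). Hence if $V'=0$ then $V''$ can contain no piece of $X_p$, leaving only $0$ or $X_{p-1}$ by Lemma \ref{SubrepsDeX}, and $X_{p-1}$ dies since $\chi_{X_{p-1}}(\tau)=0\neq p=\chi_{X_{p-1}}(\sigma_1\cdots\sigma_p)$; while if $V'\neq 0$ then $V'=a_0^1+\cdots+a_0^{p-1}$ and $V''\supseteq X_p$, and since $a_0^1+\cdots+a_0^{p-1}+X_p$ fails the same character test ($p-1$ versus $p-1-p$), one must also have $X_{p-1}\subseteq V''$, giving $V=B$. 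Your list of candidates (with $V'$ and $V''$ chosen independently) is therefore both too long and built on an inapplicable lemma, and the character comparisons you propose on $\tau$ versus $\sigma_0\cdots\sigma_{p-1}$ alone cannot close all the resulting cases.
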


\begin{proof}
Let $V$ be an irreducible $\Sigma_{p^2}$--invariant subrepresentation of $B$. As a representation of $S$, it must have the form
$V'+V''$, where $V'$ is a sum of representations of the form $a_0^j$ and $V''$ is a sum of representations of the form $\Ind_K^S(\omega^{j_0} \otimes \ldots \otimes \omega^{j_{p-1}})$ with $(j_0,\ldots,j_{p-1}) \in B_{p-1} \cup B_p $. Since the character of $V''$ vanishes on $S-K$, we have $\chi_{V'}(\tau)=\chi_{V'}(\tau^j)$ for all $j$. That is, $V'$ restricted to the subgroup generated by $\tau$ is $\Sigma_p$--invariant, hence $V'=0$ or $V'=a_1^0+\ldots+a_{p-1}^0$. 

If $V'=0$, then $V''$ can not contain any subrepresentation of $X_p$ because otherwise its restriction to $K$ would contain a subrepresentation
of the form $\omega^{j_0} \otimes \ldots \otimes \omega^{j_{p-1}}$ with all $j_i \in \{1,\ldots,p-1\}$ and since $\Res_K^S(V)$ is $\Aut_{\Sigma_{p^2}}(K)$--invariant,
$V'$ would contain $ \omega \otimes \ldots \otimes \omega$. Therefore $V''=X_{p-1}$ or $V''=0$. But $X_{p-1}$ is not $\Sigma_{p^2}$--invariant because
\[ \chi_{X_{p-1}}(\tau) = 0 \neq p = \chi_{X_{p-1}}(\sigma_1 \cdots \sigma_p) . \]
Thus $V=0$. If $V' = a_1^0+\ldots+a_{p-1}^0$, then $V'$ must contain at least $X_p$ by the same argument above. But $ W = a_1^0+\ldots+a_{p-1}^0+X_p$ is not $\Sigma_{p^2}$--invariant
because
\[ \chi_W(\tau) = p-1 \neq p-1-p = \chi_W(\sigma_1 \cdots \sigma_p). \]
Therefore $V'' \neq 0$. Since $\Res_K^S(W)$ is $\Aut_{\Sigma_{p^2}}(K)$--invariant, so is $\Res_K^S(V'')$
and by Lemma \ref{SubrepsDeX}, we obtain $V''=X_{p-1}$. Therefore $V=B$ and so $B$ is an irreducible $\Sigma_{p^2}$--invariant representation.
\end{proof}

\begin{lemma}
The representation $ D = a_0^1+\ldots+a_0^{p-1}+a_1^0+\ldots+a_{p-1}^0+X_p$ is an irreducible $\Sigma_{p^2}$--invariant
representation.
\end{lemma}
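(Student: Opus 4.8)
The plan is to mimic the structure of the preceding lemmas, proving $\Sigma_{p^2}$--invariance by a character computation and then proving irreducibility by analyzing possible subrepresentations. First I would verify $\Sigma_{p^2}$--invariance. Since $D = B' - X_2 - \ldots - X_{p-2} + (a_1^0+\ldots+a_{p-1}^0 - X_1)$, one could try to reuse earlier computations, but it is cleaner to compute $\chi_D$ directly on the three nontrivial conjugacy-class types described after Lemma~\ref{conjS}. On a product of $m$ disjoint $p$-cycles $\prod_{j=1}^m \sigma_{i_j}^{\alpha_j}$, the summand $a_0^1+\ldots+a_0^{p-1}$ contributes $p-1$ (its character there is that of the sum of nontrivial $\Z/p$--characters evaluated at the identity of $\Z/p$), the summand $a_1^0+\ldots+a_{p-1}^0$ contributes $-1$ (as computed inside Lemma~\ref{RepA}), and $\chi_{X_p}$ contributes $-p$ when $m=p$ by Lemma~\ref{CharacterX}; I must check these combine to a value depending only on the class type, and in particular that the value on $p$ disjoint $p$-cycles matches the value on $p^2$-cycles. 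On elements $\prod \sigma_j^{\alpha_j}\tau^b$ with $b\neq 0$, the inductions $X_p$ vanish (inductions from $K$ vanish off $K$), so $\chi_D$ reduces to $\chi_{a_0^1+\ldots+a_0^{p-1}} + \chi_{a_1^0+\ldots+a_{p-1}^0}$, and I would show this equals $-1$ on $p^2$-cycles and the appropriate constant on products of $p$ disjoint $p$-cycles, matching the $K$--side values.

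Next I would prove irreducibility by the subrepresentation argument used repeatedly above. Let $V$ be an irreducible $\Sigma_{p^2}$--invariant subrepresentation of $D$, and write $V = V' + V''$ as a representation of $S$, where $V'$ is a sum of linear characters $a_0^j$ and $a_i^0$ (the fixed-point-type representations), and $V''$ is a sum of inductions $\Ind_K^S(\omega^{j_0}\otimes\cdots\otimes\omega^{j_{p-1}})$ with tuples in $B_p$. Since $\chi_{V''}$ vanishes on $S-K$, the restriction of $V'$ to $N=\langle\tau\rangle$ must be $\Sigma_p$--invariant; this forces $V'$ to be built from full orbits under the $\Z/p$ cyclic action, so the admissible possibilities for $V'$ are $0$, $a_0^1+\ldots+a_0^{p-1}$, $a_1^0+\ldots+a_{p-1}^0$, or their sum. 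Here I expect a subtlety: unlike in the earlier lemmas where only one family of $a$'s appeared, $D$ contains \emph{both} $a_0^j$ and $a_i^0$, so I must handle the mixed cases and rule out the wrong combinations.

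Then I would eliminate the unwanted possibilities by testing $\Aut_{\Sigma_{p^2}}(K)$--invariance of $\Res_K^S(V)$ together with $\Sigma_{p^2}$--invariance via characters. If $V''\neq 0$, then because $\Res_K^S(V)$ is $\Aut_{\Sigma_{p^2}}(K)$--invariant and $\Res_K^S(X_p)$ is the full orbit-sum $\sum_{W_p}\omega^{j_0}\otimes\cdots\otimes\omega^{j_{p-1}}$, Lemma~\ref{SubrepsDeX} (suitably applied to the single index $p$) forces $V''=X_p$; but $\Res_K^S(X_p)$ contains a tuple with all coordinates nonzero, so by the same invariance argument $V'$ must contain $\omega\otimes\cdots\otimes\omega$, i.e.\ $V'$ must contain $a_0^1+\ldots+a_0^{p-1}$. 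Conversely, by examining which $\Sigma_p$--invariant pieces of $V'$ are compatible, I would show $V'$ must be the \emph{full} sum $a_0^1+\ldots+a_0^{p-1}+a_1^0+\ldots+a_{p-1}^0$, ruling out the proper sub-possibilities by the characters $\chi_{a_0^1+\ldots+a_0^{p-1}}(\tau)=-1$ versus $p-1$ computed in Lemma~\ref{RepA}, which shows neither half alone is $\Sigma_{p^2}$--invariant. Finally, if $V''=0$ then $V'$ is $\Sigma_{p^2}$--invariant by itself, and the same character obstruction ($-1\neq p-1$) kills every nonzero choice, forcing $V=0$.

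The main obstacle I anticipate is the character bookkeeping when $V''=X_p$ but $V'$ is only \emph{part} of the full $a$-sum: I must confirm that no proper combination such as $a_0^1+\ldots+a_0^{p-1}+X_p$ or $a_1^0+\ldots+a_{p-1}^0+X_p$ is $\Sigma_{p^2}$--invariant, which requires comparing $\chi(\tau)$ against $\chi(\sigma_1\cdots\sigma_p)$ using Lemma~\ref{CharacterX}'s value $\chi_{X_p}(\sigma_1\cdots\sigma_p)=-p$ and the evaluations $\chi_{a_0^1+\ldots+a_0^{p-1}}(\tau)=-1$, $\chi_{a_1^0+\ldots+a_{p-1}^0}(\tau)=-1$. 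Showing that exactly the full representation $D$ survives all these constraints is the delicate step; once it is established, the conclusion $V=D$ (hence irreducibility) follows as in the earlier lemmas.
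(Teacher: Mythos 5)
Your irreducibility argument follows the paper's route: write $V=V'+V''$, use $\Aut_{\Sigma_{p^2}}(K)$--invariance of $\Res_K^S(V)$ to force the $a_0^j$--part and $X_p$ to appear together in full (their restrictions to $K$ together form a single orbit of $\F_p^{\times}\wr\Sigma_p$ on $\{1,\ldots,p-1\}^p$), and use the restriction to $\langle\tau\rangle$ to force the $a_i^0$--part to be $0$ or the full sum. Where you diverge is the invariance of $D$: the paper gets it in one line from $A+B=C+D$, since invariance is a linear condition on characters and $A$, $B$, $C$ are already known to be invariant. You miss this shortcut and propose a direct character computation, and that is where the gap lies.

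Concretely, you have interchanged the two families of linear characters. With the convention $a_i^j=\omega^j\otimes\cdots\otimes\omega^j\otimes\omega^i$, the sum $a_0^1+\cdots+a_0^{p-1}$ is trivial on $\langle\tau\rangle$, and its character at $\sigma_{i_1}^{\alpha_1}\cdots\sigma_{i_m}^{\alpha_m}$ is $\sum_{k=1}^{p-1}\omega^{k(\alpha_1+\cdots+\alpha_m)}$, which equals $p-1$ or $-1$ according to whether $\alpha_1+\cdots+\alpha_m\equiv 0 \pmod p$ --- not the constant $p-1$ you assert; it is $a_1^0+\cdots+a_{p-1}^0$ that contributes the constant $p-1$ on $K$ and $-1$ at $\tau$. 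Your direct verification would therefore need $\chi_{X_p}$ on products of $m<p$ disjoint $p$-cycles (Lemma \ref{CharacterX} only gives the value at $\sigma_1\cdots\sigma_p$), and the whole point is that the non-constant part of $\chi_{X_p}$ on each such class cancels against the non-constant part of $\chi_{a_0^1+\cdots+a_0^{p-1}}$; with the constant contributions you claim, the check does not go through. The same swap infects the last step of your irreducibility argument: you propose to exclude $a_0^1+\cdots+a_0^{p-1}+X_p$ using $\chi_{a_0^1+\cdots+a_0^{p-1}}(\tau)=-1$, but the correct value is $p-1$; with your value one would get $\chi(\tau)=-1=(p-1)-p=\chi(\sigma_1\cdots\sigma_p)$ and the obstruction vanishes, whereas the correct comparison $p-1\neq -1$ is exactly the one the paper makes. (The other mixed case you worry about, $a_1^0+\cdots+a_{p-1}^0+X_p$, never arises: if $X_p\subseteq V$, then $\Aut_{\Sigma_{p^2}}(K)$--invariance already forces $a_0^1+\cdots+a_0^{p-1}\subseteq V$.)
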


\begin{proof}
Since $A+B=C+D$ and the previous lemmas show that $A$, $B$ and $C$ are $\Sigma_{p^2}$--invariant, so is $D$. Let $V$ be
an irreducible $\Sigma_{p^2}$--invariant subrepresentation of $D$. As a representation of $S$, it must have the form
$V'+V''$, where $V'$ is a subrepresentation of $a_0^1+\ldots+a_0^{p-1}+X_p$ and $V''$ is a subrepresentation of $a_1^0+\ldots+a_{p-1}^0$. 
Note that the restriction of $V$ to $K$ is a sum of representations of $\omega^{j_0} \otimes \ldots \otimes \omega^{j_{p-1}}$
with $j_i \in \{1,\ldots,p-1\}$, and trivial representations. Since $\Res_K^S(V)$ is $\Aut_{\Sigma_{p^2}}(K)$--invariant,
we must have $V'=0$ or $V' = a_0^1+\ldots+a_0^{p-1}+X_p$. We can not have $V'=0$ unless $V=0$ because we showed in the proof of Lemma \ref{RepA}
that no nontrivial subrepresentation of $a_1^0+\ldots+a_{p-1}^0$ is $\Sigma_{p^2}$--invariant. Hence $V'= a_0^1+\ldots+a_0^{p-1}+X_p$.
The restriction of $V'$ to the subgroup generated by $\tau$ is a sum of trivial representations and regular representations, while
the restriction of $V''$ is a sum of representations of the form $\omega^j$ with $j \neq 0$. Since the restriction of $V$ to the
subgroup generated by $\tau$ must be $\Sigma_p$--invariant, either $V''=0$ or $V''= a_1^0+\ldots+a_{p-1}^0$. But we saw in the
proof of the previous lemma that $a_0^1+\ldots+a_0^{p-1}+X_p$ is not $\Sigma_{p^2}$--invariant, hence $V=D$ and so $D$ is an 
irreducible $\Sigma_{p^2}$--invariant representation.
\end{proof}

These lemmas and the equality $A+B=C+D$ show the lack of uniqueness of decomposition of $\Sigma_{p^2}$--invariant
representations.

\begin{theorem}
If $p$ is an odd prime, the $\Sigma_{p^2}$--invariant representations of $\Z/p \wr \Z/p$ do not 
satisfy uniqueness of factorization as a sum of irreducible $\Sigma_{p^2}$--invariant representations.
\end{theorem}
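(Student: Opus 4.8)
The plan is to exhibit one and the same $\Sigma_{p^2}$--invariant representation of $S = \Z/p \wr \Z/p$ admitting two essentially different factorizations into irreducible $\Sigma_{p^2}$--invariant representations, which directly contradicts uniqueness. The four candidate irreducibles are precisely the representations $A$, $B$, $C$, $D$ constructed in the preceding lemmas, and the entire argument pivots on the single identity $A + B = C + D$.

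First I would verify that this identity holds as an honest equality of representations of $S$ (not merely of virtual representations), by expanding all four into the basic summands $a_i^j$ and $X_i$. Unwinding the definitions gives
\[ A + B = (a_1^0 + \cdots + a_{p-1}^0) + (a_0^1 + \cdots + a_0^{p-1}) + X_1 + X_{p-1} + X_p \]
and
\[ C + D = X_1 + X_{p-1} + (a_0^1 + \cdots + a_0^{p-1}) + (a_1^0 + \cdots + a_{p-1}^0) + X_p, \]
so the two multisets of summands coincide; in particular the common object is a genuine (non-virtual) direct sum. This step is pure bookkeeping once the definitions are in hand.

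Second, I would invoke the earlier lemmas, which show that each of $A$, $B$, $C$, $D$ is an irreducible $\Sigma_{p^2}$--invariant representation. Consequently both $A + B$ and $C + D$ are factorizations of the same invariant representation into irreducible invariant summands. It then remains to confirm that these factorizations are genuinely different after reordering, i.e. that $\{A, B\}$ and $\{C, D\}$ are distinct as multisets. For an odd prime $p$ the indices $1$, $p-1$, $p$ are pairwise distinct, so it suffices to compare the induced summands $X_i$ appearing in each: the representations $A$, $B$, $C$, $D$ contain, respectively, the induced factors $\{X_1\}$, $\{X_{p-1}, X_p\}$, $\{X_1, X_{p-1}\}$ and $\{X_p\}$. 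These four sets are pairwise distinct, so $A, B \notin \{C, D\}$, the two factorizations share no common irreducible factor, and uniqueness fails.

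The genuine difficulty of the theorem lies entirely in the preceding lemmas, namely in constructing the four representations and establishing their irreducibility and $\Sigma_{p^2}$--invariance through the character computation of Lemma \ref{CharacterX} and the subrepresentation criterion of Lemma \ref{SubrepsDeX}. For the theorem itself the only point demanding care is the distinctness check, and this is exactly where the hypothesis that $p$ is odd enters: it guarantees that the indices $1$, $p-1$, $p$ labelling the $X_i$ are distinct. For $p = 2$ these indices collapse ($X_1 = X_{p-1}$, so $C = 2X_1$) and the construction degenerates, consistent with the uniqueness established earlier for $\Sigma_4$.
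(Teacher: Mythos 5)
Your proposal is correct and follows essentially the same route as the paper: it exhibits the identity $A+B=C+D$, relies on the preceding lemmas for the irreducibility and $\Sigma_{p^2}$--invariance of $A$, $B$, $C$, $D$, and concludes nonuniqueness. Your explicit check that $\{A,B\}$ and $\{C,D\}$ are distinct as multisets (via the induced factors $X_1$, $X_{p-1}$, $X_p$, whose indices are pairwise distinct precisely because $p$ is odd) is a welcome point of care that the paper leaves implicit.
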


\section{The case of $\Sigma_8$}
\label{Sigma8}

In this section we find a nonunique factorization into irreducible $\Sigma_8$--invariant representations
of $D_8 \wr \Z/2$.
\newline

We consider the $2$-Sylow subgroup $S$ of $\Sigma_8$ that is generated by the elements 
\[(1,3,2,4),(1,2),(5,7,6,8),(5,6),(1,5)(2,6)(3,7)(4,8), \]
which will be denoted by $r_1,s_1,r_2,s_2$ and $t$, respectively. Let $H_j$ be the subgroup generated by $r_j$ and $s_j$ for $j=1$, $2$. 
We also let $K$ be the subgroup generated by $t$. Notice that $H_j$ is isomorphic to $D_8$ by identifying $r_j$ and $s_j$ with the usual 
generators $r$ and $s$. We also have that $K$ is isomorphic to $\Z/2$. Note that $S$ is isomorphic to $D_8 \wr \Z/2$. There is a general
theory to determine the conjugacy classes of groups of the form $G \wr \Sigma_n$, see for instance Section 4.2 of \cite{JK}. However, since
$D_8 \wr \Z/2$ is small, we prefer to give a direct determination.

First note that the $D_8 \wr \Z/2$--conjugacy classes of elements in the normal subgroup $D_8 \times D_8$ 
correspond to the orbits under the action of $\Z/2$ on the $D_8 \times D_8$--conjugacy classes of $D_8 \times D_8$.
We obtain in this way the representatives
\[ \{ 1 ,  s_1 , r_2^2 , s_2r_2 , r_2 , s_1s_2 , s_1r_2^2 , s_1s_2r_2 , s_1r_2 , r_1^2r_2^2 , r_1^2 s_2 r_2 , r_1^2 r_2 , s_1r_1s_2r_2 , s_1r_1r_2 , r_1r_2 \}. \] 
On the other hand, an element which is not in $D_8 \times D_8$ must have the form $t(a,b)$ with $a$, $b \in D_8$. By conjugating
with $(a,1)$, we can assume that it has the form $t(1,x)$. It is easy to show that $t(1,x)$ is conjugate to $t(1,y)$ in $D_8 \wr \Z/2$ 
if and only if $x$ is conjugate to $y$ in $D_8$, therefore the obtain the desired remaining classes, represented by the elements
\[ \{t , ts_2 , tr_2^2 , ts_2r_2 , tr_2 \}. \]
Now that we found all the conjugacy classes of elements of $D_8 \wr \Z/2$, we can use their decomposition in disjoint cycles to find out 
how these classes fuse into ten $\Sigma_8$--conjugacy classes. First, the conjugacy classes of $1$, $s_1$, $r_2$, $s_1 r_2$ and $tr_2$ in 
$D_8 \wr \Z/2$ and $\Sigma_8$ coincide. The rest of conjugacy classes are fused in $\Sigma_8$ according to the following sets, where two 
elements belong to the same set if and only if they are $\Sigma_8$--conjugate.
\[ \{ r_2^2 , s_2r_2, s_1s_2 \}, \{ s_1r_2^2 , s_1s_2r_2 \}, \{ r_1^2r_2^2 , r_1^2 s_2 r_2, s_1r_1s_2r_2, t \}, \{ r_1^2 r_2, s_1r_1r_2, ts_2 \}, \{ r_1r_2, tr_2^2 , ts_2r_2 \}. \]

Next we describe the irreducible representations of $S\cong D_8\wr \Z/2$ using Theorem 25.6 from \cite{HCTFG}. As in Section \ref{Sigma4y9}, let 
$1$, $X$, $Y$, $XY$ and $Z$ denote the irreducible representations of $D_8$. Let $1$ and $\omega$ denote the
trivial and sign irreducible representations of $\Z/2$, respectively. For each irreducible representation $V$ of $D_8$, the character $\chi_V \chi_V$ of
$H_1 H_2$ is fixed under the action of $S$, hence it has an extension $\psi_V$ to $S$ and we obtain characters 
\[\psi_V \chi_W \]
of irreducible representations of $S$ for each irreducible representation $V$ of $D_8$ and each irreducible representation $W$ of $\Z/2$. We denote
by $ V \otimes V \otimes W$ the representation with character $\psi_V \chi_W$ for simplicity. Given two different irreducible representations $V$ and $V'$ of $D_8$, the isotropy of the
action of $S$ on $\chi_V \chi_{V'}$ equals $H_1 H_2$, hence $\Ind_{H_1H_2}^S(\chi_V \chi_{V'})$ is the character of an irreducible representation
of $S$. We denote this representation by $\Ind(V \otimes V')$. We use the following short names for the irreducible representations
of $S$. 

\begin{table}[H]
\begin{center}
	\begin{tabular}{l|l}
 &   \\ \hline
$x_1$ & $1$ \\
$x_2$ & $1\otimes 1\otimes \omega$ \\
$x_3$ & $X\otimes X\otimes 1$ \\
$x_4$ & $X\otimes X\otimes \omega$ \\
$x_5$ & $XY\otimes XY\otimes 1$  \\
$x_6$ & $XY\otimes XY\otimes \omega$ \\
$x_7$ & $Y\otimes Y\otimes 1$ \\
$x_8$ & $Y\otimes Y\otimes \omega$ \\
$x_9$ & $\Ind (1\otimes X)$ \\
$x_{10}$ & $\Ind (Y\otimes XY)$ \\
$x_{11}$ & $\Ind (1\otimes XY)$ \\
$x_{12}$ & $\Ind (X\otimes Y)$ \\
$x_{13}$ & $\Ind (1\otimes Y)$ \\
$x_{14}$ & $\Ind (X\otimes XY)$ \\
$x_{15}$ & $Z\otimes Z\otimes 1$ \\
$x_{16}$ & $Z\otimes Z\otimes \omega$ \\ 
$x_{17}$ & $\Ind (1\otimes Z)$ \\
$x_{18}$ & $\Ind (X\otimes Z)$ \\
$x_{19}$ & $\Ind (XY\otimes Z)$ \\
$x_{20}$ & $\Ind (Y\otimes Z)$ 
\end{tabular}
\caption{Short names for irreducible representations of $D_8 \wr \Z/2$}
\end{center}
\end{table}

We will include the character table for $S$ in the appendix. Using this table and the description
of how the conjugacy classes of $D_8 \wr \Z/2$ fused into $\Sigma_8$--conjugacy classes, it is easy
to check that the representations
\begin{align*}
A & = x_4 + x_9 + +x_{13}+x_{16}+ 2x_{17}+x_{18}, \\
B & = x_{12}+x_{14}+x_{15}+x_{16}+x_{18}+x_{19}+x_{20}, \\
C & = x_4+x_9+x_{14}+x_{16}+x_{17}+x_{18}+x_{19}, \\
D & = x_{12}+x_{13}+x_{15}+x_{16}+x_{17}+x_{18}+x_{20},
\end{align*}
are $\Sigma_8$--invariant.

\begin{proposition}
The representations $A$ and $B$ are irreducible $\Sigma_8$--invariant representations.
\end{proposition}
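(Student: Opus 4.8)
The plan is to show that $A$ and $B$ are $\Sigma_8$--invariant representations (already verified) which admit no proper nontrivial $\Sigma_8$--invariant subrepresentation. The strategy follows the same template used for the representations $A$, $B$, $C$, $D$ in the $\Sigma_{p^2}$ case: given a candidate $\Sigma_8$--invariant subrepresentation $V$, we first constrain which irreducible constituents $x_i$ of $S$ can appear in $V$, then use the explicit fusion data (how $D_8 \wr \Z/2$--conjugacy classes merge into $\Sigma_8$--conjugacy classes) to rule out all intermediate possibilities. The $\Sigma_8$--invariance of $V$ means precisely that its character is constant on each of the ten $\Sigma_8$--conjugacy classes listed above, so the proof is a finite character computation organized around those ten classes.

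First I would argue structurally. A subrepresentation $V$ of $A$ is a sub-sum of the constituents of $A$, so $V = c_4 x_4 + c_9 x_9 + c_{13}x_{13} + c_{16}x_{16} + c_{17}(2x_{17}) + c_{18}x_{18}$ with each coefficient bounded by the multiplicity in $A$. The key leverage comes from restricting to the normal subgroup $H_1 H_2 = D_8 \times D_8$ and to the subgroup $K = \langle t\rangle$: an element $t(1,x)$ outside $D_8 \times D_8$ has character zero for every induced representation $\Ind(V \otimes V')$, so the invariance condition at the classes containing such elements forces the \emph{one-dimensional-on-$t$} constituents and the \emph{induced} constituents to balance separately. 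This separation—already exploited in Lemmas \ref{RepA} and in the $B = a_0^1 + \ldots + X_p$ lemma—is what pins down the admissible coefficients.

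Then I would go class by class. Using the character table (in the appendix) and the five nontrivial fused classes $\{r_2^2, s_2r_2, s_1s_2\}$, $\{s_1r_2^2, s_1s_2r_2\}$, $\{r_1^2r_2^2, r_1^2s_2r_2, s_1r_1s_2r_2, t\}$, $\{r_1^2r_2, s_1r_1r_2, ts_2\}$, $\{r_1r_2, tr_2^2, ts_2r_2\}$, each fusion relation yields a linear equation among the coefficients $c_i$ (equality of $\chi_V$ across the representatives of a single $\Sigma_8$--class). Solving this small linear system over the bounded range of coefficients should leave only $V = 0$ and $V = A$ as solutions, and similarly only $V = 0$ and $V = B$. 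For $B$ the same scheme applies with its own constituents $x_{12}, x_{14}, x_{15}, x_{16}, x_{18}, x_{19}, x_{20}$.

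The main obstacle will be the bookkeeping in the character computation: unlike the $\Sigma_{p^2}$ case, where the constituents were cleanly indexed by the combinatorics of $\Z/p$--orbits and the relevant character values had closed forms (Lemma \ref{CharacterX}), here the twenty irreducibles of $D_8 \wr \Z/2$ and the irregular way the ten $\Sigma_8$--classes are assembled from fifteen-plus-five $D_8 \wr \Z/2$--classes mean the invariance conditions must be read off directly from the appendix table with no shortcut. I expect the crucial fused class to be $\{r_1^2r_2^2, r_1^2s_2r_2, s_1r_1s_2r_2, t\}$, which straddles both $D_8 \times D_8$ and the coset $t(D_8 \times D_8)$ and thereby couples the induced constituents to the non-induced ones; verifying consistency there is where a would-be proper invariant subrepresentation is eliminated.
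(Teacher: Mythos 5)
Your strategy is exactly the paper's: a $\Sigma_8$--invariant subrepresentation must be a sub-sum of the listed constituents with bounded multiplicities, and equating character values across the representatives of each fused $\Sigma_8$--class (read off the appendix table) forces that sub-sum to be $0$ or all of $A$ (resp.\ $B$). The paper carries out the computation you outline as a case analysis on whether $x_4$ (for $A$) or $x_{12}+x_{14}$ (for $B$) occurs in the subrepresentation, and your identification of the fused class $\{r_1^2r_2^2, r_1^2s_2r_2, s_1r_1s_2r_2, t\}$ as the one coupling the induced constituents to the rest is indeed where the elimination closes.
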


\begin{proof}
We first show that $A$ is irreducible as a $\Sigma_8$--invariant representation. Let $W$ be a
$\Sigma_8$--invariant subrepresentation of $A$. 

Assume first that $x_4$ is not a subrepresentation of $W$ so that $\chi_W(ts_2r_2) = 0$. Since 
$W$ is $\Sigma_8$--invariant, we must also have $\chi_W(tr_2^2)=0$ and therefore $x_{16}$ is not a 
subrepresentation of $W$. Then $\chi_W(t)=0$ and therefore $\chi_W(s_1r_1s_2r_2) =0$. Since the
characters of $x_{17}$ and $x_{18}$ vanish at $s_1r_1s_2r_2$ and 
\[ \chi_{x_9}(s_1r_1s_2r_2) = -2 = \chi_{x_{13}}(s_1r_1s_2r_2), \]
we conclude that $x_9$ and $x_{13}$ are not subrepresentations of $W$. Because $t$ and $r_1^2r_2^2$
are $\Sigma_8$--conjugate, we have $\chi_W(r_1^2r_2^2)=0$. But
\[ \chi_{x_{17}}(r_1^2r_2^2) = -4 = \chi_{x_{18}}(r_1^2r_2^2), \]
hence $x_{17}$ and $x_{18}$ are not subrepresentations of $W$, from where $W=0$.

Now assume that $x_4$ is a subrepresentation of $W$. Then $\chi_W(ts_2r_2)=1$ and therefore
$\chi_W(tr_2^2)=1$. Since the characters of $x_9$, $x_{13}$, $x_{17}$ and $x_{18}$ vanish on
$tr_2^2$ and 
\[ \chi_{x_4}(tr_2^2) = -1, \qquad \chi_{x_{16}}(tr_2^2) = 2, \]
we can assure that $W$ contains $x_{16}$ as a subrepresentation. Note that $\chi_W(t)=-3$ and
this must equal $\chi_W(s_1r_1s_2r_2)$. Since the characters of $x_{16}$, $x_{17}$ and $x_{18}$
vanish on $s_1r_1s_2r_2$ and
\[ \chi_{x_4}(s_1r_1s_2r_2) = 1, \qquad \chi_{x_9}(s_1r_1s_2r_2) = -2, \qquad \chi_{x_{13}}(s_1r_1s_2r_2) = -2, \]
we obtain that $x_9$ and $x_{13}$ are subrepresentations of $W$. Finally, because $\chi_W(t)=-3$,
we must have $\chi_W(r_1^2r_2^2)=-3$. We already know that $ V = x_4+x_9+x_{13}+x_{16}$ is a subrepresentation
of $W$ and
\[ \chi_V(r_1^2r_2^2) = 9, \qquad \chi_{x_{17}}(r_1^2r_2^2) = -4, \qquad \chi_{x_{18}}(r_1^2r_2^2) = -4, \]
we must have that $2x_{17}+x_{18}$ is a subrepresentation of $W$ and therefore $W=A$. Hence $A$ is
irreducible as a $\Sigma_8$--invariant representation.

Now we prove that $B$ is irreducible as a $\Sigma_8$--invariant representation. Let $W$ be a $\Sigma_8$--invariant
subrepresentation of $B$ and note that $\chi_W(tr_2s_2)=0$, hence $\chi_W(r_1r_2)=0$. Since the characters of the
irreducible subrepresentations of $B$ vanish on $r_1r_2$, except for $x_{12}$ and $x_{14}$ and
\[ \chi_{x_{12}}(r_1r_2) = -2, \qquad \chi_{x_{14}}(r_1r_2) = 2, \]
then either $x_{12}+x_{14}$ is a subrepresentation of $W$ or $W$ does not contain $x_{12}$ nor $x_{14}$ as subrepresentations.

Assume first that $x_{14}$ and $x_{12}$ are not subrepresentations of $W$. Then $\chi_W(s_1s_2)=0$ and therefore $\chi_W(r_2^2)=0$.
The characters of $x_{18}$, $x_{19}$ and $x_{20}$ vanish on $r_2^2$ and 
\[ \chi_{x_{15}}(r_2^2) = -4 = \chi_{x_{16}}(r_2^2), \]
hence $x_{15}$ and $x_{16}$ are not subrepresentations of $W$. Then $\chi_W(t)=0$ and therefore $\chi_W(r_1^2r_2^2) = 0$. The characters
of $x_{18}$, $x_{19}$ and $x_{20}$ equal $-4$ at $r_1^2r_2^2$, so we can conclude $W=0$.

Now assume that $x_{12}+x_{14}$ is a subrepresentation of $W$. Then $\chi_W(s_1s_2)=-4$ and therefore $\chi_W(r_2^2)=-4$. The characters
of $x_{18}$, $x_{19}$ and $x_{20}$ vanish on $r_2^2$ and
\[ \chi_{x_{12}+x_{14}}(r_2^2) = 4, \qquad \chi_{x_{15}}(r_2^2) = -4 = \chi_{x_{16}}(r_2^2), \]
hence $x_{15}+x_{16}$ must be a subrepresentation of $W$. Then $\chi_W(t)=0$ and therefore $\chi_W(r_1^2r_2^2)=0$. We have that 
$V=x_{12}+x_{14}+x_{15}+x_{16}$ is a subrepresentation of $W$ and
\[ \chi_V(r_1^2r_2^2) = 12, \qquad \chi_{x_{18}}(r_1^2r_2^2) = -4 = \chi_{x_{19}}(r_1^2r_2^2) = \chi_{x_{20}}(r_1^2r_2^2), \]
so we obtain $W=B$. Thus $B$ is irreducible as a $\Sigma_8$--invariant representation.
\end{proof}

Now we have $A+B=C+D$, where $A$ and $B$ are irreducible $\Sigma_8$--invariant representations
and $C$ and $D$ are $\Sigma_8$--invariant representations. Note that $C$ and $D$ do not contain
neither $A$ nor $B$ as subrepresentation. Therefore, after factoring, the right-hand side will give
us a factorization of $A+B$ in terms of different irreducible $\Sigma_8$--invariant representations.

\begin{corollary}
The $\Sigma_8$--invariant representations of $D_8 \wr \Z/2$ do not 
satisfy uniqueness of factorization as a sum of irreducible $\Sigma_8$--invariant 
representations.
\end{corollary}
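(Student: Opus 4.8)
The plan is to deduce nonuniqueness directly from the identity $A+B=C+D$ together with the Proposition, which guarantees that $A$ and $B$ are irreducible $\Sigma_8$--invariant representations. The expression $A+B=C+D$ is an equality of $\Sigma_8$--invariant representations of $S\cong D_8\wr\Z/2$, so it already exhibits $A+B$ with one factorization into the two irreducible factors $A$ and $B$. It then remains to extract a second, inequivalent factorization from the expression $C+D$.

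First I would record the existence of factorizations: by induction on dimension, every $\Sigma_8$--invariant representation is a finite sum of irreducible $\Sigma_8$--invariant representations, since a non-irreducible one splits off a proper nontrivial $\Sigma_8$--invariant summand of strictly smaller dimension. Applying this to $C$ and to $D$ produces decompositions $C=\sum_i C_i$ and $D=\sum_j D_j$ into irreducible $\Sigma_8$--invariant pieces, and concatenating them gives
\[ A+B \;=\; C+D \;=\; \sum_i C_i + \sum_j D_j, \]
a factorization of $A+B$ into irreducible $\Sigma_8$--invariant representations.

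The crux, and the step demanding the most attention, is to verify that this factorization is genuinely different from $\{A,B\}$ after reordering. For this it suffices to show that neither $A$ nor $B$ occurs among the summands $C_i$ or $D_j$. If some $C_i$ equalled $A$, then $A$ would be a subrepresentation of $C$ as an ordinary representation of $S$; but this is impossible, since the multiplicity of the irreducible constituent $x_{17}$ in $A$ is $2$ while in $C$ it is $1$. The remaining three cases are handled by the same kind of multiplicity comparison: $A$ is not a subrepresentation of $D$ because $x_4$ appears in $A$ but not in $D$; $B$ is not a subrepresentation of $C$ because $x_{15}$ appears in $B$ but not in $C$; and $B$ is not a subrepresentation of $D$ because $x_{14}$ appears in $B$ but not in $D$. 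These are precisely the containments ruled out in the paragraph preceding the statement.

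Consequently $A+B$ admits the two factorizations $A+B$ and $\sum_i C_i+\sum_j D_j$ into irreducible $\Sigma_8$--invariant representations, and no reordering identifies them, so uniqueness of factorization fails. In the terminology of \cite{CCo}, this shows that the monoid $\Rep_{\Sigma_8}(D_8\wr\Z/2)$ is not factorial.
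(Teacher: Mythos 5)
Your proposal is correct and follows essentially the same route as the paper: factor $C+D$ into irreducible $\Sigma_8$--invariant summands and observe that neither $A$ nor $B$ can occur among them since neither is a subrepresentation of $C$ or of $D$. You merely make explicit two points the paper leaves implicit (the inductive existence of factorizations and the concrete multiplicity comparisons ruling out the four containments), and those checks against the definitions of $A$, $B$, $C$, $D$ are all accurate.
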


\begin{remark}
It can be shown in a similar manner that $C$ and $D$ are irreducible $\Sigma_8$--invariant representations, so in
fact $C+D$ is the additional factorization of $A+B$. 
\end{remark}

\begin{remark}
We could not have used the irreducibility criterion from Remark 2.12 in \cite{CCo} here because we did not determine $R_{\Sigma_8}(D_8 \wr \Z/2)$. For the
same reason, it could not be used for the case of $\Sigma_{p^2}$ with $p>5$ from Section \ref{NonUnique}.
\end{remark}

\appendix

\section{Character tables}

In this appendix we show the character tables of $\Z/3 \wr \Z/3$ and $D_8 \wr \Z/2$ following the notations of Section \ref{Sigma4y9}
and Section \ref{Sigma8}.
\begin{table}[H]
\begin{center}
{\footnotesize
\noindent\adjustbox{max width=\textwidth}{   \begin{tabular}{l|l*{17}{c}r}
  & $1$ &  $\sigma_0$ & $\sigma_0^2$ & $\sigma_0 \sigma_1$ & $\sigma_0 \sigma_1^2$ & $\sigma_0^2 \sigma_1$ & $\sigma_0^2 \sigma_1^2$ & $\sigma_0 \sigma_1 \sigma_2$ & $\sigma_0 \sigma_1 \sigma_2^2$ & $\sigma_0 \sigma_1^2 \sigma_2^2$ & $\sigma_0^2 \sigma_1^2 \sigma_2^2$ & $\tau$ & $\sigma_0 \tau$ & $\sigma_0^2 \tau$ & $\tau^2$  & $\sigma_0 \tau^2 $ & $\sigma_0^2 \tau^2$ \\ \hline
$a_0$ & $1$ & $1$ & $1$ & $1$ & $1$ & $1$ & $1$ & $1$ & $1$ &$1$ &$1$ &$1$ &$1$ &$1$ &$1$ &$1$ &$1$  \\ 

 &  &  &  &  &  &  &  &  &  & & & & & & & &  \\

$a_1$       & $1$ & $1$ & $1$ & $1$ &  $1$& $1$& $1$& $1$& $1$ &$1$ &$1$ & $\omega$ & $\omega$ & $\omega$ & $\omega^2$ & $\omega^2$ & $\omega^2$   \\

 &  &  &  &  &  &  &  &  &  & & & & & & & &  \\

$a_2$	& $1$ & $1$ & $1$ & $1$ &  $1$& $1$& $1$& $1$& $1$ &$1$ &$1$ & $\omega^2$ & $\omega^2$ & $\omega^2$ & $\omega$ & $\omega$ & $\omega$ 	\\

 &  &  &  &  &  &  &  &  &  & & & & & & & &  \\

$b_0$	& $1$ & $\omega$ & $\omega^2$ & $\omega^2$ & $1$ & $1$ & $\omega$ & $1$ & $\omega$ & $\omega^2$ & $1$ & $1$ & $\omega$ & $\omega^2$ & $1$ & $\omega$ & $\omega^2$ 	 \\

 &  &  &  &  &  &  &  &  &  & & & & & & & &  \\

$b_1$	& $1$ & $\omega$ & $\omega^2$ & $\omega^2$ & $1$ & $1$ & $\omega$ & $1$ & $\omega$ & $\omega^2$ & $1$ & $\omega$ & $\omega^2$ & $1$ & $\omega^2$ & $1$ & $\omega$ 	 \\

 &  &  &  &  &  &  &  &  &  & & & & & & & &  \\

$b_2$	& $1$ & $\omega$ & $\omega^2$ & $\omega^2$ & $1$ & $1$ & $\omega$ & $1$ & $\omega$ & $\omega^2$ & $1$ & $\omega^2$ & $1$ & $\omega$ & $\omega$ & $\omega^2$ & $1$ 	 \\

 &  &  &  &  &  &  &  &  &  & & & & & & & &  \\

$c_0$		& $1$ & $\omega^2$ & $\omega$ & $\omega$ & $1$ & $1$ & $\omega^2$ & $1$ & $\omega^2$ & $\omega$ & $1$ & $1$ & $\omega^2$ & $\omega$ & $1$ & $\omega^2$ & $\omega$ 	\\

 &  &  &  &  &  &  &  &  &  & & & & & & & &  \\

$c_1$		& $1$ & $\omega^2$ & $\omega$ & $\omega$ & $1$ & $1$ & $\omega^2$ & $1$ & $\omega^2$ & $\omega$ & $1$ & $\omega$ & $1$ & $\omega^2$ & $\omega^2$ & $\omega$ & $1$ 	\\

 &  &  &  &  &  &  &  &  &  & & & & & & & &  \\

$c_2$		& $1$ & $\omega^2$ & $\omega$ & $\omega$ & $1$ & $1$ & $\omega^2$ & $1$ & $\omega^2$ & $\omega$ & $1$ & $\omega^2$ & $\omega$ & $1$ & $\omega$ & $1$ & $\omega^2$ \\

 &  &  &  &  &  &  &  &  &  & & & & & & & &  \\

$x_0$ & $3$ & $\omega + 2$ & $\omega^2 +2$ & $2\omega + 1$ & $0$ & $0$& $2\omega^2 +1$ & $3\omega$ & $2\omega +\omega^2$ & $2\omega^2 +\omega$ & $3\omega^2$ & $0$ & $0$ & $0$ & $0$ & $0$ & $0$\\

 &  &  &  &  &  &  &  &  &  & & & & & & & &  \\

$x_1$ & $3$& $\omega^2 +2$ & $\omega +2$ & $2\omega^2 +1$ & $0$ & $0$ & $2\omega +1$ & $3\omega^2$ & $2\omega^2 + \omega$ & $2\omega +
 \omega^2$ & $3\omega$ & $0$ & $0$ & $0$ & $0$ & $0$ & $0$\\
 
  &  &  &  &  &  &  &  &  &  & & & & & & & &  \\

$y_0$ & $3$ & $2\omega +1$ & $2\omega^2 +1$ & $2\omega + \omega^2$ & $0$ & $0$ & $2\omega^2 + \omega$ & $3\omega^2$ & $\omega^2 +2$ & $\omega +2$ & $3\omega$ & $0$ & $0$ & $0$ & $0$ & $0$ & $0$\\

 &  &  &  &  &  &  &  &  &  & & & & & & & &  \\
 
$y_1$ & $3$ & $0$ & $0$ & $0$ & $3\omega^2$ & $3\omega$ & $0$ & $3$ & $0$ & $0$ & $3$ & $0$ & $0$ & $0$ & $0$ & $0$ & $0$\\

 &  &  &  &  &  &  &  &  &  & & & & & & & &  \\
 
$y_2$ & $3$ & $0$ & $0$ & $0$ & $3\omega$ & $3\omega^2$ & $0$ & $3$ & $0$ & $0$ & $3$ & $0$ & $0$ & $0$ & $0$ & $0$ & $0$\\

 &  &  &  &  &  &  &  &  &  & & & & & & & &  \\
 
$y_3$ & $3$ & $2\omega^2 +1$ & $2\omega + 1$ & $2\omega^2 + \omega$ & $0$ & $0$ & $2\omega + \omega^2$ & $3\omega$ & $\omega +2$ & $\omega^2 +2$ & $3\omega^2$ & $0$ & $0$ & $0$ & $0$ & $0$ & $0$\\

 &  &  &  &  &  &  &  &  &  & & & & & & & &  \\
 
$z_0$ & $3$ & $2\omega + \omega^2$ & $ 2\omega ^2 + \omega$ & $\omega^2 +2$ & $0$ & $0$ & $\omega + 2$ & $3\omega$ & $2\omega^2 +1$ & $2\omega +1$ & $3\omega^2$ & $0$ & $0$ & $0$ & $0$ & $0$ & $0$\\

 &  &  &  &  &  &  &  &  &  & & & & & & & &  \\
 
$z_1$ & $3$ & $2\omega^2 +\omega$ & $2\omega +\omega^2$ & $\omega +2$ & $0$ & $0$ & $\omega^2 +2$ &  $3\omega^2$ & $2\omega + 1$ & $2\omega^2 +1$ & $3\omega$ & $0$ & $0$ & $0$ & $0$ & $0$ & $0$
	\end{tabular} }} \caption{Character table of $\Z/3 \wr \Z/3$.} 
\end{center}
\end{table} 

\begin{table}[H]
\begin{center}
{\footnotesize
\noindent\adjustbox{max width=\textwidth}{
  \begin{tabular}{l|l*{20}{c}r}
 & $1$ &  $s_1$ & $r_2^2$ & $s_2r_2$ & $r_2$ & $s_1s_2$ & $s_1r_2^2$ & $s_1s_2r_2$ & $s_1r_2$ & $r_1^2r_2^2$ & $r_1^2 s_2 r_2$ & $r_1^2 r_2$ & $s_1r_1s_2r_2$ & $s_1r_1r_2$ & $r_1r_2$  & $t$ & $ts_2$ & $tr_2^2$ & $ts_2r_2$ & $tr_2$ \\ \hline
$x_1$ & $1$ & $1$ & $1$ & $1$ & $1$ & $1$ & $1$ & $1$ & $1$ &$1$ &$1$ &$1$ &$1$ &$1$ &$1$ &$1$ &$1$ &$1$ &$1$ &$1$  \\ 

 &  &  &  &  &  &  &  &  &  & & & & & & & & & & & \\

$x_2$ & $1$ & $1$ & $1$ & $1$ &  $1$& $1$& $1$& $1$& $1$ &$1$ &$1$ & $1$ & $1$ & $1$ & $1$ & $-1$ & $-1$ & $-1$ & $-1$ & $-1$  \\

  &  &  &  &  &  &  &  &  &  & & & & & & & & & & & \\

$x_3$	& $1$ & $1$ & $1$ & $-1$ &  $-1$& $1$& $1$& $-1$& $-1$ &$1$ &$-1$ & $-1$ & $1$ & $1$ & $1$ & $1$ & $1$ & $1$ & $-1$ & $-1$ 	\\

  &  &  &  &  &  &  &  &  &  & & & & & & & & & & & \\

$x_4$	& $1$ & $1$ & $1$ & $-1$ &  $-1$& $1$& $1$& $-1$& $-1$ &$1$ &$-1$ & $-1$ & $1$ & $1$ & $1$ & $-1$ & $-1$ & $-1$ & $1$ & $1$ 	\\

  &  &  &  &  &  &  &  &  &  & & & & & & & & & & & \\

$x_5$	& $1$ & $-1$ & $1$ & $1$ &  $-1$& $1$& $-1$& $-1$& $1$ &$1$ &$1$ & $-1$ & $1$ & $-1$ & $1$ & $1$ & $-1$ & $1$ & $1$ & $-1$ 	\\

  &  &  &  &  &  &  &  &  &  & & & & & & & & & & & \\

$x_6$	& $1$ & $-1$ & $1$ & $1$ &  $-1$& $1$& $-1$& $-1$& $1$ &$1$ &$1$ & $-1$ & $1$ & $-1$ & $1$ & $-1$ & $1$ & $-1$ & $-1$ & $1$ 	\\

  &  &  &  &  &  &  &  &  &  & & & & & & & & & & & \\

$x_7$	& $1$ & $-1$ & $1$ & $-1$ &  $1$& $1$& $-1$& $1$& $-1$ &$1$ &$-1$ & $1$ & $1$ & $-1$ & $1$ & $1$ & $-1$ & $1$ & $-1$ & $1$ 	\\

  &  &  &  &  &  &  &  &  &  & & & & & & & & & & & \\

$x_8$	& $1$ & $-1$ & $1$ & $-1$ &  $1$& $1$& $-1$& $1$& $-1$ &$1$ &$-1$ & $1$ & $1$ & $-1$ & $1$ & $-1$ & $1$ & $-1$ & $1$ & $-1$ 	\\

  &  &  &  &  &  &  &  &  &  & & & & & & & & & & & \\

$x_9$	& $2$ & $2$ & $2$ & $0$ &  $0$& $2$& $2$& $0$& $0$ &$2$ &$0$ & $0$ & $-2$ & $-2$ & $-2$ & $0$ & $0$ & $0$ & $0$ & $0$ 	\\

  &  &  &  &  &  &  &  &  &  & & & & & & & & & & & \\

$x_{10}$ & $2$ & $-2$ & $2$ & $0$ &  $0$& $2$& $-2$& $0$& $0$ &$2$ &$0$ & $0$ & $-2$ & $2$ & $-2$ & $0$ & $0$ & $0$ & $0$ & $0$ 	\\

  &  &  &  &  &  &  &  &  &  & & & & & & & & & & & \\

$x_{11}$ & $2$ & $0$ & $2$ & $2$ &  $0$& $-2$& $0$& $0$& $-2$ &$2$ &$2$ & $0$ & $2$ & $0$ & $-2$ & $0$ & $0$ & $0$ & $0$ & $0$ 	\\
 
   &  &  &  &  &  &  &  &  &  & & & & & & & & & & & \\

$x_{12}$ & $2$ & $0$ & $2$ & $-2$ &  $0$& $-2$& $0$& $0$& $2$ &$2$ &$-2$ & $0$ & $2$ & $0$ & $-2$ & $0$ & $0$ & $0$ & $0$ & $0$ 	\\

  &  &  &  &  &  &  &  &  &  & & & & & & & & & & & \\
 
$x_{13}$ & $2$ & $0$ & $2$ & $0$ &  $2$& $-2$& $0$& $-2$& $0$ &$2$ &$0$ & $2$ & $-2$ & $0$ & $2$ & $0$ & $0$ & $0$ & $0$ & $0$ 	\\

  &  &  &  &  &  &  &  &  &  & & & & & & & & & & & \\
 
$x_{14}$ & $2$ & $0$ & $2$ & $0$ &  $-2$& $-2$& $0$& $2$& $0$ &$2$ &$0$ & $-2$ & $-2$ & $0$ & $2$ & $0$ & $0$ & $0$ & $0$ & $0$ 	\\

  &  &  &  &  &  &  &  &  &  & & & & & & & & & & & \\
 
$x_{15}$ & $4$ & $0$ & $-4$ & $0$ &  $0$& $0$& $0$& $0$& $0$ &$4$ &$0$ & $0$ & $0$ & $0$ & $0$ & $2$ & $0$ & $-2$ & $0$ & $0$ 	\\

  &  &  &  &  &  &  &  &  &  & & & & & & & & & & & \\
 
$x_{16}$ & $4$ & $0$ & $-4$ & $0$ &  $0$& $0$& $0$& $0$& $0$ &$4$ &$0$ & $0$ & $0$ & $0$ & $0$ & $-2$ & $0$ & $2$ & $0$ & $0$ 	\\

  &  &  &  &  &  &  &  &  &  & & & & & & & & & & & \\
 
$x_{17}$ & $4$ & $2$ & $0$ & $2$ &  $2$& $0$& $-2$& $0$& $0$ &$-4$ &$-2$ & $-2$ & $0$ & $0$ & $0$ & $0$ & $0$ & $0$ & $0$ & $0$ 	\\

  &  &  &  &  &  &  &  &  &  & & & & & & & & & & & \\
 
$x_{18}$ & $4$ & $2$ & $0$ & $-2$ &  $-2$& $0$& $-2$& $0$& $0$ &$-4$ &$2$ & $2$ & $0$ & $0$ & $0$ & $0$ & $0$ & $0$ & $0$ & $0$ 	\\

  &  &  &  &  &  &  &  &  &  & & & & & & & & & & & \\
 
$x_{19}$ & $4$ & $-2$ & $0$ & $2$ &  $-2$& $0$& $2$& $0$& $0$ &$-4$ &$-2$ & $2$ & $0$ & $0$ & $0$ & $0$ & $0$ & $0$ & $0$ & $0$ 	\\

  &  &  &  &  &  &  &  &  &  & & & & & & & & & & & \\
 
$x_{20}$ & $4$ & $-2$ & $0$ & $-2$ &  $2$& $0$& $2$& $0$& $0$ &$-4$ &$2$ & $-2$ & $0$ & $0$ & $0$ & $0$ & $0$ & $0$ & $0$ & $0$ 
	\end{tabular} }} \caption{Character table of $D_8 \wr \Z/2$.}
\end{center}
\end{table}

\noindent \begin{bf}Acknowledgments:\end{bf} The first author was partially supported by CONAHCYT's Frontier Science Project CF-2023-I-2649.



\end{document}